\newcommand{\argmin}[1]{\underset{#1}{\mathrm{argmin}}}
\newtheorem{theorem}{Theorem}[section]
\newtheorem{proposition}[theorem]{Proposition}
\newcolumntype{P}[1]{>{\centering\arraybackslash}p{#1}}
\title{Learning Dynamical Systems and Bifurcation via Group Sparsity}
\author{Hayden Schaeffer}
\address{Department of Mathematical Sciences, Carnegie Mellon University, Pittsburgh, Pennsylvania, United States}
\email{schaeffer@cmu.edu}
\author{Giang Tran}
\address{Department of Applied Mathematics, University of Waterloo, Waterloo, Ontario, Canada}
\email{giang.tran@uwaterloo.ca}
\author{Rachel Ward}
\address{Department of Mathematics, The University of Texas at Austin, Austin, Texas, United States}
\email{rward@math.utexas.edu}
\begin{document}
\maketitle

\begin{abstract}

Learning governing equations from a family of data sets which share the same physical laws but differ in bifurcation parameters is challenging. This is due, in part, to the wide range of phenomena that could be represented in the data sets as well as the range of parameter values. On the other hand, it is common to assume only a small number of candidate functions contribute to the observed dynamics. Based on these observations, we propose a group-sparse penalized method for model selection and parameter estimation for such data. We also provide convergence guarantees for our proposed numerical scheme. Various numerical experiments including the 1D logistic equation, the 3D Lorenz sampled from different bifurcation regions, and a switching system
provide numerical validation for our method and suggest potential applications to applied dynamical systems.
 \end{abstract}



\section{Introduction}\label{sec:introduction}

Nonlinear systems of ordinary differential equations (ODEs) are used to describe 
countless  physical and biological processes. Often, the governing equations that model a system must be derived theoretically or computed to fit a given dataset. Model selection and parameter estimation methods are used to make decisions on the form of the governing equations and the values of the model parameters. One major difficulty is determining the set of appropriate  candidate functions to fit to the data, since the task of searching through a large set of potential candidate functions can be computationally intractable. Therefore, it is common to pre-select a small subset of potential candidate functions \cite{burnham2003model, kuepfer2007ensemble}. However, this requires prior knowledge on the system and the potential structure of the governing equations. Another issue involves the estimation of model parameters when using multiple sampling sources, since a `one-size-fit-all' approach may not be possible when the parameters in the governing equations vary over the different sources. In this work, we develop a group-sparse penalized method for model selection and parameter estimation of governing equations where the data is given over multiple sources. Possible applications include parameter analysis through data-driven bifurcation diagrams, analysis of chaotic systems, and parameter estimation from incomplete and non-uniform sources.

There have been several recent works utilizing sparse optimization for model selection. These methods were inspired by the regression approach from \cite{bongard2007automated, schmidt2009distilling}, which used a symbolic regression algorithm to learn physical laws from data by fitting derivatives of the data to candidate functions. In terms of regression approaches, sparsity can be incorporated by adding an $\ell^0$ term to the method, which penalizes the number of non-zero candidate functions in the learned model. In some cases, it is possible to relax the $\ell^0$ penalty to the $\ell^1$ norm and still maintain sparse solutions, see \cite{candes2006stable, candes2007sparsity, donoho2006most, rudelson2008sparse, rauhut2010compressive}. Both the $\ell^0$ and $\ell^1$ penalties have seen many applications from image processing to data mining, and now in methods for learning governing equations from dynamic data. The key idea of sparse model selection for learning governing equations is to find the best fit of the temporal derivative over a large set of potential candidates by enforcing that the selected model uses only a few terms (\textit{i.e.} the model should be sparse). This is based on the sparsity-of-effect principle, since one expects that a small number of candidate functions contribute to the observed dynamics. In \cite{brunton2016discovering}, a sequential least-squares thresholding algorithm was proposed for learning dynamical systems. The algorithm iterates between the least-squares solution and a thresholding step, which is meant to retain only the most meaningful terms in the least-squares approximation. 
In \cite{tran2017exact}, a joint outlier detection and model selection method was developed for learning governing equations from data with time-intervals of highly corrupted information. A group-sparse penalty is used, namely the $\ell^{2,1}$ norm, to couple the intervals of corruption between each variable while also penalizing the size of these intervals.  It was also shown in  \cite{tran2017exact} that the separation between the clean data and outliers can be exactly recovered from chaotic systems. An $\ell^1$ regularized least-squares approach (related to the  LASSO method \cite{tibshirani1996regression}) is used in \cite{schaeffer2017learning} to learn nonlinear partial differential equations from spatio-temporal data. The dictionary matrix is constructed from a set of basis functions applied to both the data and its spatial derivatives.  In \cite{rudy2017data}, a sequential least-squares method related to \cite{brunton2016discovering} was used to learn PDE from spatio-temporal data as well. In \cite{schaeffer2017sparse}, an $\ell^0$ basis pursuit problem was proposed which used an integrated candidate set to identify governing equations from noisy data. In \cite{schaeffer2017extracting}, an $\ell^1$ basis pursuit problem was developed for extracting governing equations from under-sampled data. The $\ell^1$ basis pursuit approach solves the model selection problem exactly (under certain conditions), when the data is sampled using several proposed strategies. It is important to note that the sparse optimization and data-based methods have seen many applications to various scientific problems over that last few years, see \cite{schaeffer2013sparse,mackey2014compressive,hou2015sparse, rauhut2016interpolation, adcock2017polynomial, peng2016polynomial, tran20151,caflisch2015pdes,ozolicnvs2013compressed,brunton2016discovering,bright2013compressive, berkooz1993proper,holmes2012turbulence, giannakis2015data, williams2015data, coifman2006diffusion,coifman2005geometric,nadler2006diffusion,nadler2006diffusion2, schmid2010dynamic, schmid2012decomposition,schaeffer2016accelerated,scarnati2017using}.

In several of those works, it is noted that multiple sampling sources increase the accuracy of the recovered coefficients, when the coefficients are assumed to remain fixed over all sources. This is, in part, due to the fact that over various sources, the data will exhibit different behaviors which help to distinguish between possible candidates.  In this work, we develop a group-sparse model for extracting governing equations from multiple sources, whose parameters may vary between the sources.  In particular, we enforce that the learned model is the same between the sources and allow the coefficients to vary for each source. The learned coefficients from group-sparse methods are block-wise sparse in the sense that variables are grouped into subsets which are either simultaneously zero or nonzero. This can be thought of as an `all-or-nothing' penalty within each group. Mathematically, group-sparse methods often use an $\ell^{2,0}$ or $\ell^{2,1}$ penalty to group together related variables while penalizing the number of active (nonzero) groups \cite{yuan2006model}. To solve the group-sparse optimization problem, several algorithms have been proposed such as hard-iterative thresholding algorithms \cite{gribonval2008atoms,blanchard2014greedy};  simultaneous orthogonal matching pursuits (SOMP) based on correlations  \cite{gribonval2008atoms} or noise stabilization \cite{determe2015simultaneous}, and subspace methods \cite{lee2012subspace}. Recovery guarantees for these algorithms were also investigated, for example, using a probability model on the nonzero coefficients \cite{gribonval2008atoms} or full rank conditions \cite{davies2012rank}. Some applications of group sparsity to data-based learning include microarray data analysis \cite{ma2007supervised}, spectrum cartography \cite{bazerque2011group}, and source localization \cite{ollila2015nonparametric}. Using the group-sparse penalty can lead to a decrease in the number of degrees of freedom in the inverse problem which can potentially increase the accuracy of recovery \cite{deng2012group}. This is the case for learning governing equations with multiple sources or from different bifurcation regions, as detailed in Section~\ref{section:numerics}.

The paper is outlined as follows. In Section~\ref{section:problem}, the framework for sparse model selection is detailed as well as the group-sparse structural condition and problem statement. Convergence conditions related to the numerical solver is discussed in Section~\ref{sec:coercive}. In Section~\ref{section:numerics}, the numerical method, based on the iterative hard thresholding algorithm, is explained and the computational scheme is presented. Computational results for several dynamical systems are shown in Section~\ref{section:results}. We end with some concluding remarks in 
Section~\ref{section:conclusion}.

\section{Problem Statement}\label{section:problem}

Consider the variable $x(t; \lambda^{(i)})\in \mathbb{R}^n$ governed by the nonlinear dynamical system:
\begin{equation}
\dot{x}(t;\lambda^{(i)}) = f(x(t);\lambda^{(i)}),\quad i = 1,\ldots, m.
\label{eq:ode}
\end{equation}
In vector form, $x(t; \lambda^{(i)}) = (x_1(t; \lambda^{(i)}),\ldots, x_n(t; \lambda^{(i)}))^T$ represents the state of the system at time $t$, the function $f(x;\lambda^{(i)}) = (f_1(x;\lambda^{(i)}),\ldots, f_n(x;\lambda^{(i)}))^T$ defines the nonlinear evolution of the system, and $\lambda^{(i)}$ is a bifurcation parameter of the system. We would like to learn the function $f$ and the model parameters $\lambda^{(i)}$, when only measurements on $x$ are provided. The parameters $\lambda^{(i)}$ correspond to different bifurcation regimes in the observed variables. We have no \textit{a priori} information on the functions $f$, the parameters $\lambda^{(i)}$, or the bifurcation region in which the state space $x(t;\lambda^{i})$ resides. The velocity $\dot{x}$ is assumed to be observed or calculated, with sufficient accuracy, from the states $x$.

Since the nonlinear functions $f$ are unknown, we will represent them as a linear combination of a large set of candidate nonlinear functions. This transforms the nonlinear regression problem on $f$ to a linear inverse problem with respect to the coefficients. In the construction below, we develop the proposed approach for polynomial systems; however, it can be easily generalized to other (and possibly redundant) candidate sets. Since we are given the observations of the states $x(t;\lambda^{(i)})$, Equation~\eqref{eq:ode} decouples and thus the model selection can be done component-wise. Therefore, we consider the following representation for the nonlinear function $f_j$:
\begin{equation}
f_j(x(t);\lambda^{(i)}_{j}) = c^{(i)}_{j,0} + \sum\limits_{k} c^{(i)}_{j,k}\ x_k(t;\lambda^{(i)}) + \sum\limits_{k,l} c^{(i)}_{j,k,l}\ x_k(t;\lambda^{(i)}) \, x_l(t;\lambda^{(i)}) + \ldots
\label{eq:polyrep}
\end{equation}
where  $c^{(i)}_{j} = (c_{j,0} ,c_{j,1},\ldots, c_{j,n}, c_{j,1,1},\ldots, c_{j,n,n},\ldots)$ is the vector of unknown coefficients corresponding to the $j^{th}$ component of Equation~\eqref{eq:ode} and the $i^{th}$ source. The parameters $\lambda^{(i)}_{j}$ denote the subset of parameters corresponding to $f_j$. Note that observations $x(t;\lambda^{(i)}) $ still depend on the entire system $\lambda^{(i)}$.  The model selection problem is to identify the support set of $c^{(i)}_{j}$, \textit{i.e.} the indices that correspond to the nonzero values. Since the model is assumed to have the same representation for all $i$, the support set of $c^{(i)}_{j}$ is also the same for all $i$ and thus we will denote it as $S_j$.  The parameter estimation problem corresponds to learning the nonzero values of $c^{(i)}_{j}$ for each $i$ and $j$. It is worth noting that if the correct model is identified, then the coefficients $c^{(i)}_{j}$ restricted to the set $S_j$ should be identical to $\lambda^{(i)}_{j}$. 

To illustrate and clarify the notation, consider Duffing's equation, $\ddot{u}+\delta \dot{u} -\beta u + u^3=0 $, which can be written as a nonlinear first-order system:
\begin{align*}
\dot{x}_1 &= x_2\\
\dot{x}_2 &= \beta x_1-\delta x_2 -x_1^3
\end{align*}
The model parameters are $\lambda = (1,\beta, -\delta, -1 )$, which will lead to different types of observed behaviors. The parameter subsets are $\lambda^{(i)}_{1}=1$ (which is the same for all $i$) and $\lambda^{(i)}_{2}=(\beta^{(i)}, -\delta^{(i)}, -1 )$. Consider two states $i\in \{1,2\}$. For $i=1$, if we observe data from the system with $\beta>0$ and $\delta>0$, there would be three equilibrium points: two sinks and a saddle. For $i=2$, if we observe data from the system with $\beta<0$ and $\delta>0$, there would be one sink (at the origin).  The vector $\lambda$ represents the controls on the behavior of the output and are unknown to the user. Learning the representation of $f_2$ from observations of $x$ and $\dot{x}$ over these two parameter states, if successful, would yield: 
\begin{align*}
f_2(x; \lambda^{(i)}_{2})&= 0 + c^{(i)}_{2,1}\ x_1(t;\lambda^{(i)})+ c^{(i)}_{2,2}\ x_2(t;\lambda^{(i)})+0 +\ldots + 0+ c^{(i)}_{2,1,1,1}\ x^3_1(t;\lambda^{(i)})+0 +\ldots 
\end{align*}
where the learned coefficient vector is $c^{(i)}_{2} = (0 ,c^{(i)}_{2,1},c^{(i)}_{2,2},0, \ldots, 0,c^{(i)}_{2,1,1,1} ,0, \ldots)$. The support set of $c^{(i)}_{2}$ is the same over all $i$ and, if the representation is accurate, the restriction of $c^{(i)}_{2}$ onto the nonzero values,  $(c^{(i)}_{2,1},c^{(i)}_{2,2}, c^{(i)}_{2,1,1,1})$, should match $\lambda^{(i)}_{2}=(\beta^{(i)}, -\delta^{(i)}, -1 )$ for the two regimes $i\in \{1,2\}$.

The goal is to recover $c^{(i)}_{j}$ by fitting each $f_j$ to the observed or calculated velocity. The inverse problem can be written as a linear system with unknowns $c^{(i)}_{j}$ as follows. For a fixed parameter state $\lambda^{(i)}$, we denote the observed variables at times $\{t^{(i)}_1, ... , t^{(i)}_{\ell_i}\}$ ($\ell_i$ is the number of temporal measurements obtained from state $i$) as 
\begin{equation}
\{x(t^{(i)}_1;\lambda^{(i)}), \ x(t^{(i)}_2;\lambda^{(i)}),\ldots,\ x(t^{(i)}_{\ell_i};\lambda^{(i)})\}.
\label{eqn:points}
\end{equation}
Similar to \cite{brunton2016discovering,tran2017exact,schaeffer2017sparse,schaeffer2017learning,rudy2017data, schaeffer2017extracting}, the data matrix $X^{(i)}$, the velocity matrix $V^{(i)}$, and the dictionary matrix $D^{(i)}$ are defined as:

\begin{align*}
   X^{(i)} &= \begin{bmatrix}
       |  &   |     &           & |      \\
   x^{(i)}_1 & x^{(i)}_2 & \ldots & x^{(i)}_n  \\
       |  &  |      &            & |      \\
   \end{bmatrix}_{\ell_i\times n} 
   = \begin{bmatrix}
          {x}_1(t_1;\lambda^{(i)}) & {x}_2(t_1;\lambda^{(i)}) &\ldots & {x}_n(t_1;\lambda^{(i)})\\
           {x}_1(t_2;\lambda^{(i)}) & {x}_2(t_2;\lambda^{(i)})&\ldots & {x}_n(t_2;\lambda^{(i)})\\
           \vdots &\vdots&\ldots & \vdots\\
          {x}_1(t_{\ell_i};\lambda^{(i)}) &{x}_2(t_{\ell_i};\lambda^{(i)}) &\ldots & {x}_n(t_{\ell_i};\lambda^{(i)})\\
         \end{bmatrix}_{\ell_i\times n},
\end{align*}
\begin{align*}
   V ^{(i)} &= \begin{bmatrix}
       |  &   |     &           & |      \\
   \dot{x}^{(i)}_1 & \dot{x}^{(i)}_2 & \ldots & \dot{x}^{(i)}_n  \\
       |  &  |      &            & |      \\
   \end{bmatrix}_{\ell_i\times n} 
   = \begin{bmatrix}
           \dot{x}_1(t_1;\lambda^{(i)}) & \dot{x}_2(t_1;\lambda^{(i)}) &\ldots & \dot{x}_n(t_1;\lambda^{(i)})\\
           \dot{x}_1(t_2;\lambda^{(i)}) & \dot{x}_2(t_2;\lambda^{(i)})&\ldots & \dot{x}_n(t_2;\lambda^{(i)})\\
           \vdots &\vdots&\ldots & \vdots\\
           \dot{x}_1(t_{\ell_i};\lambda^{(i)}) & \dot{x}_2(t_{\ell_i};\lambda^{(i)}) &\ldots & \dot{x}_n(t_{\ell_i};\lambda^{(i)})\\
         \end{bmatrix}_{\ell_i\times n},
\end{align*}

\noindent and
\begin{align*}
       D^{(i)}  &= \left[\textbf{1}_{\ell_i,1}, \ X^{(i)} , \ (X^{(i)})^2 , \ (X^{(i)})^3,\ \ldots \right]_{\ell_i\times \overline{n}},
  \end{align*}
 where $\overline{n} = {n+p \choose n}$ is the total number of monomials up to degree $p$. Each column of these matrices corresponds to the vectorization of each variable over the temporal measurements. For each $q$, $(X^{(i)})^q$ denotes the values of all monomials of degree $q$ at times $\{t^{(i)}_1, ... , t^{(i)}_{\ell_i}\}$. 
 
For every index $j\in \{1,\ldots,n\}$ (the components of the system), the problem of finding $f_j(x;\lambda^{(i)}_j)$, for all $ i = 1,\ldots, m$, can be reformulated to finding $c_j^{(i)}$ given the data matrix $X^{(i)}$. In particular, the problem can be stated as: find $c_j^{(i)}\in \mathbb{R}^{\overline{n}}$ such that:
\begin{equation}
V_j^{(i)} = D^{(i)} c_j^{(i)},\quad i=1,\ldots, m,
\label{eq:invpro}
\end{equation}
where $V_j^{(i)}$ is the $j^{th}$ column of $V^{(i)}$. 

Next, define the coefficient matrix by:
\begin{align*}
C_j = \begin{bmatrix}
|  &  | & |      & |  &    \\
c_j^{(1)} & c_j^{(2)} & \ldots & c_j^{(m)} \\
|  &  | & |      & |  &    \\
\end{bmatrix}_{\overline{n}\times m}
\end{align*}
where each column corresponds to the vector $c_j^{(i)}$ for a fixed $i$ and each row corresponds to the same candidate term in the representation of the $f_j$'s. Let $D$ be a block diagonal matrix whose diagonal corresponds to the matrices $D^{(i)}$, for $i = 1,\ldots, m$. Let $C^{vec}_j:=[c^{(1)}_j,\cdots, c^{(m)}_j]$ be the column-based vectorization of the coefficient matrix $C_j$, and let $V^{vec}_j: =[V^{(1)}_j,\cdots, V^{(m)}_j] $ be the column-based vectorization of the velocities of component $j^{th}$ along each source. Then the optimization problem can be rewritten as a least-square fitting :
 \begin{equation}
 { \min\limits_{C_j} \| DC^{vec}_j -V^{vec}_j \|_2^2  \left(= \sum\limits_{i=1}^m\| D^{(i)} c_j^{(i)} -V_j^{(i)} \|_2^2\right)}
 \label{eqn:invpro2}
\end{equation}

This is ill-posed due to errors in the measurements $x$, errors in approximating the velocity, and issues involving the large set of candidate functions (which would lead to overfitting if one solves Equation~\eqref{eqn:invpro2} using the pseudo-inverse, for example). 

To regularize the problem, we include a penalty on the number of active candidate functions. This will help to prevent overfitting and lead to meaningful result in practice. The assumption on the system is that $c_j^{(i)}$ has the same support set (in $j$) for each $i$, but can differ in value. In other words, we can group each row together to be either zero or nonzero, therefore the number of active (nonzero) rows is sparse. This leads to the following group-sparse optimization problem:
 \begin{equation}
 \boxed { \min\limits_{C_j}\  \| DC^{vec}_j -V^{vec}_j \|_2^2 + \gamma \|C_j\|_{2,0}  \quad \Leftrightarrow  \quad \min\limits_{C_j}\  \sum\limits_{i=1}^m\| D^{(i)} c_j^{(i)} -V_j^{(i)} \|_2^2 +  \gamma \|C_j\|_{2,0}}
  \label{model:nonconvex}
\end{equation}
where the $\ell^{2,0}$ penalty is defined as:
\begin{equation*}
\|A\|_{2,0}: = \# \left\{ k: \left( \sum_{\ell} |a_{k,\, \ell}|^2 \right)^{1/2} \neq 0\right\}.
\end{equation*}
for any matrix $A=[\, a_{k,\, \ell}\, ]$. Although the problem is nonconvex, we can solve it numerically using an iterative hard thresholding algorithm, see Section~\ref{section:numerics}.

\section{Convergence Guarantees}\label{sec:coercive}

The addition of the $\ell^{2,0}$ penalty is to encourage sparse solution from the least-squares minimization. If the matrix $D$ is not full rank or badly conditioned, then Equation~\eqref{model:nonconvex} is not guaranteed to have a unique solution. Indeed, our dictionary matrix, formed by monomials up to possibly high order, will generally not be well-conditioned.  In Proposition \ref{prop:coercive1} Section~\ref{sec:coercive}, we characterizes properties of the dynamics such that $D$ (or equivalently, each $D^{(i)}$) will at least be full rank. In fact, our proposed numerical method is guaranteed to converge to a local minimizer of Equation~\eqref{model:nonconvex} if $D$ is full rank (see Section~\ref{section:numerics} and Appendix for more details).

\begin{proposition}[General bound]
Suppose, for each $i$, $\overline{n} \leq \ell_i$, there exists a subset $S \subset [\ell_i]$ of size $| S | = \overline{n}$ such that $\{X^{(i)}(k,-)\mid k\in S\}$ do not belong to a common algebraic hypersurface of degree $\leq p$ (i.e., for any $u_1, u_2, \dots, u_{\overline{n}} \in \mathbb{R}^n$, there exists a unique interpolating polynomial $u = P(x)$ of degree $\leq p$ satisfying $u_k = P(X^{(i)}(k,-))$ for $k\in S$). 
This is a necessary and sufficient condition for the dictionary matrix $D$ to be full rank: for each $D^{(i)}$, there exists a $\delta_i > 0$ such that 
\begin{equation}
\inf_{u} \frac{\| D^{(i)} u \|_2 }{ \| u \|_2} \geq \delta_i.
\end{equation}   
\label{prop:coercive1}
\end{proposition}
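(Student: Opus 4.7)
The plan is to recognize the claim as a restatement, in the language of polynomial interpolation, of the fact that a tall rectangular matrix has full column rank iff it contains an invertible square submatrix, and then to translate this statement into the language of algebraic hypersurfaces via the obvious identification of the columns of $D^{(i)}$ with monomial evaluations.

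First I would fix $i$ and observe that the column of $D^{(i)}$ indexed by a multi-index $\alpha$ with $|\alpha|\leq p$ is exactly the vector $(X^{(i)}(k,-)^\alpha)_{k=1}^{\ell_i}$. Consequently, for any $u=(u_\alpha)\in\mathbb{R}^{\overline n}$, the vector $D^{(i)}u$ is the evaluation of the polynomial $P_u(x):=\sum_{|\alpha|\leq p} u_\alpha x^\alpha$ at the rows of $X^{(i)}$. Thus $D^{(i)}u=0$ is equivalent to saying that the degree-$\leq p$ polynomial $P_u$ vanishes at every row $X^{(i)}(k,-)$, i.e.\ all rows of $X^{(i)}$ lie on the algebraic hypersurface $\{P_u=0\}$.

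Next I would prove the two implications. For the sufficient direction, assume the existence of a subset $S\subset[\ell_i]$ with $|S|=\overline n$ such that $\{X^{(i)}(k,-): k\in S\}$ admits unique polynomial interpolation of degree $\leq p$. By the translation above, the only degree-$\leq p$ polynomial vanishing on this subset is $P\equiv 0$; equivalently, the square submatrix $D^{(i)}_S\in\mathbb{R}^{\overline n\times\overline n}$ (obtained by keeping rows indexed by $S$) has trivial kernel, hence is invertible. Since $D^{(i)}_S u=0$ whenever $D^{(i)}u=0$, it follows that $D^{(i)}$ has trivial kernel, i.e.\ full column rank. For the necessary direction, assume $D^{(i)}$ has full column rank. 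Then its row space has dimension $\overline n$, so one can extract $\overline n$ linearly independent rows, indexed by some set $S$, giving an invertible $\overline n\times\overline n$ square submatrix $D^{(i)}_S$; this is exactly the unique-interpolation condition on $\{X^{(i)}(k,-):k\in S\}$.

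Finally I would upgrade full rank to the coercive inequality. Since $D^{(i)}$ has full column rank and finitely many columns, its smallest singular value
\begin{equation*}
\delta_i:=\sigma_{\min}(D^{(i)})=\inf_{u\neq 0}\frac{\|D^{(i)}u\|_2}{\|u\|_2}
\end{equation*}
is attained on the compact unit sphere and is strictly positive, yielding the stated bound. The only subtle point is the folklore equivalence \emph{unique interpolation on $S$ $\Longleftrightarrow$ $S$ does not lie on a common algebraic hypersurface of degree $\leq p$}, which I would justify in one line: if a nonzero $P$ of degree $\leq p$ vanished on $S$, then for any interpolant $Q$, the polynomial $Q+P$ would be a distinct interpolant, contradicting uniqueness; conversely, uniqueness follows from the rank-nullity theorem applied to the evaluation map on the $\overline n$-dimensional space of polynomials of degree $\leq p$. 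This is the only place where anything beyond linear algebra is used, and is really the main (minor) obstacle in the argument.
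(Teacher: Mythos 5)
Your proposal is correct and follows the same basic reduction as the paper: both arguments pass to the $\overline{n}\times\overline{n}$ submatrix $D^{(i)}_S$ obtained by keeping the rows indexed by $S$, show it is invertible under the hypothesis, and then conclude coercivity of the full matrix from positivity of the smallest singular value. Where you differ is in how invertibility of $D^{(i)}_S$ is justified. The paper treats $D^{(i)}_S$ as a (generalized) Vandermonde matrix: for $n=1$ it invokes the classical fact that a Vandermonde matrix is invertible iff its nodes are distinct, and for general $n$ it cites Theorem 4.1 of Olver's multivariate interpolation paper for the equivalence between the no-common-hypersurface condition and nonsingularity. You instead observe that $D^{(i)}_S$ \emph{is} the matrix of the evaluation map on the $\overline{n}$-dimensional space of polynomials of degree $\leq p$, so that the unique-interpolation hypothesis (as literally stated in the proposition) is bijectivity of that map, and invertibility is immediate; the equivalence with the hypersurface formulation is a one-line rank--nullity remark. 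This buys you a fully self-contained proof with no external citation, and it also makes the ``necessary'' half of the equivalence explicit (full column rank of $D^{(i)}$ yields $\overline{n}$ linearly independent rows, hence such a subset $S$), a direction the paper asserts but does not actually argue. The paper's route, on the other hand, connects the result to the named Vandermonde machinery that it reuses in Proposition 3.2. Both are valid; yours is the more elementary and slightly more complete argument.
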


In particular, for 1D systems, the conditions above states that the dictionary is full rank if there is a subset of $\overline{n}$ distinct points $x^{(i)}$.

\begin{proof}
First consider the one-dimensional case $n=1$. In this case,  $D^{(i)}_{S}$, the $\overline{n} \times \overline{n}$ dictionary matrix restricted to the rows indexed by $S$, is a square
Vandermonde matrix.  Moreover, in this case, the condition that $\overline{n} = p+1$ points in $S$ admit a unique interpolating polynomial of degree $\leq p$ is equivalent to the condition that the $p+1$ points in $S$ are distinct.  It is well-known that a Vandermonde matrix such as $D^{(i)}_{S}$ is invertible if and only if its generating points are distinct; thus, $\inf_{u} \frac{\| D^{(i)}_S u \|_2 }{ \| u \|_2} \geq \delta,$ and hence also  $\inf_{u} \frac{\| D^{(i)} u \|_2 }{ \| u \|_2} \geq \delta.$

For the general $n$-dimensional case, the $\overline{n} \times \overline{n}$ matrix $D^{(i)}_{S}$ is a \emph{generalized Vandermonde matrix}, and the stated conditions are necessary and sufficient for $D^{(i)}_S$ to be nonsingular, according to Theorem 4.1  in \cite{olver2006multivariate}. Thus, again, $\inf_{u} \frac{\| D^{(i)}_S u \|_2 }{ \| u \|_2} \geq \delta,$ and hence also  $\inf_{u} \frac{\| D^{(i)} u \|_2 }{ \| u \|_2} \geq \delta.$
\end{proof}

\begin{figure}[h!]
\centering
\includegraphics[width = 3.in]{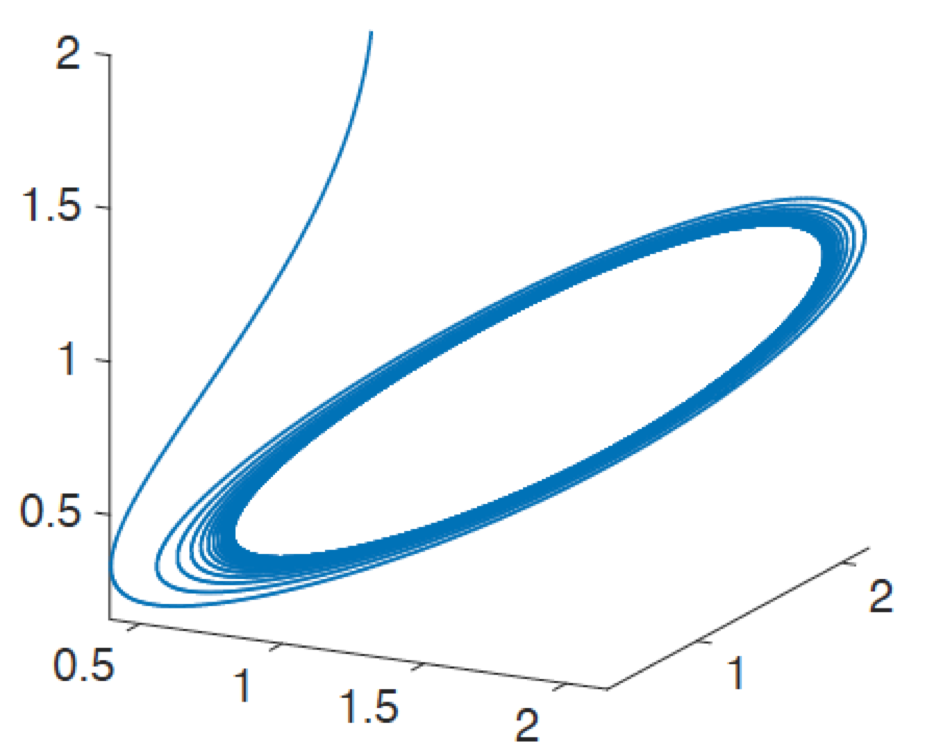}\
\includegraphics[width = 3.in]{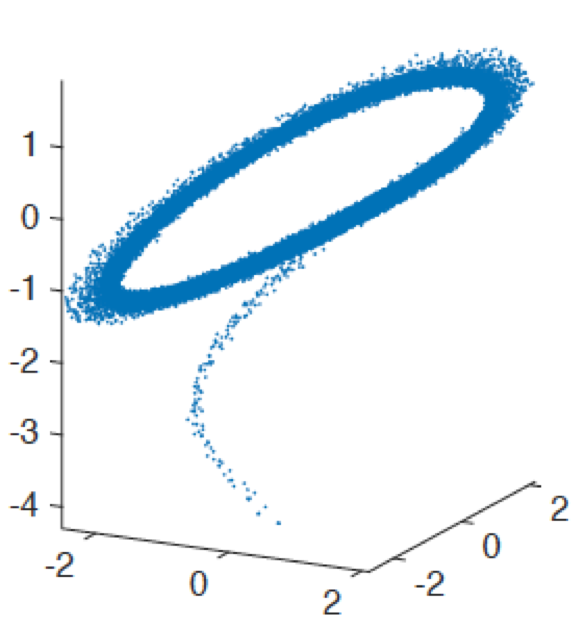}
\caption{An example where the state space quickly approaches a limit cycle, which almost stays on a hypersurface of degree 2. The state space is generated from the Lorenz system, Equation~\eqref{eqn:lorenz} with $\mu = 7.73$ and initialization $U_0= [1,1,2]$, time step $dt = 0.005$.}
\label{fig:LimitCycleFail}
\end{figure}

In Theorem~\ref{thrm:convergence}, we show that if the matrix $D$ is full rank, our proposed numerical method is guaranteed to converge to a local minimizer of the objective function. Still, the local minimizer we converge to could be far from the global minimizer.  An example of when this approach may fail is the case of a limit cycle example (see Figure \ref{fig:LimitCycleFail}). The problem is that if we initialize near the limit cycle, the dynamics lie very close to an algebraic hypersurface of degree $p=2$. This can be avoided if one could sample data away from the limit cycle.

Also, our algorithm likely converges to a local minimizer under weaker conditions than those in the above proposition; namely, the convergence requires that $D$ is coercive on sparse subsets. So $D$ can in fact be an underdetermined matrix, where we observe a number of snapshots smaller than the size of the dictionary (see Appendix). It remains an open problem to characterize general conditions on the dynamics under which this weaker condition holds.  At least for 1D dynamical systems, we can provide such a characterization.

\begin{proposition}
Consider the case $n=1$, thus $\overline{n} = p+1$.  Suppose that $\ell_i \geq s$, and at least $s$ of the $\ell_i$ points in set Equation~\eqref{eqn:points} are positive and distinct.  Then, restricted to column subsets $S \subset \overline{n}$ of size $|S| \leq s$, the dictionary matrix $D^{(i)}$ is coercive, i.e. there exists a $\delta > 0$ such that:
$$
\min_{S: |S| \leq s} \ \inf_{u \in \mathbb{R}^s}\ \frac{\| D^{(i)}_S u \|_2 }{ \| u \|_2} \geq \delta.
$$   
\label{prop:coercive2}
\end{proposition}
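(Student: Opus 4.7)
The dictionary matrix $D^{(i)}$ in the 1D setting is the $\ell_i \times (p+1)$ matrix with entries $D^{(i)}_{k,q} = (x^{(i)}_k)^q$ for $q = 0, 1, \ldots, p$, where $x^{(i)}_k$ is the $k$th sampled state. For any column subset $S \subset \{0, 1, \ldots, p\}$ with $|S| = s' \leq s$, my goal is to show that $D^{(i)}_S$ has full column rank with a quantitative lower bound on its smallest singular value, and then to take a minimum over the finite collection of such subsets.

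First I would use the hypothesis to select an index set $T \subset [\ell_i]$ of size $s$ such that the points $\{y_k := x^{(i)}_k : k \in T\}$ are all positive and pairwise distinct. For a given $S$ with $|S| = s'$, pick any subset $T' \subset T$ with $|T'| = s'$, and consider the square submatrix
\begin{equation*}
M := D^{(i)}_{T', S} = \bigl[(y_k)^{q_j}\bigr]_{k \in T',\, q_j \in S}.
\end{equation*}
This is a generalized Vandermonde matrix with distinct positive generators $\{y_k\}$ and distinct nonnegative integer exponents $\{q_j\}$. The main step is to argue that $M$ is nonsingular.

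For this step I would invoke Descartes' rule of signs: any nonzero generalized polynomial $P(y) = \sum_{j=1}^{s'} c_j\, y^{q_j}$ with $s'$ monomial terms (after factoring out the smallest power, this is a standard real polynomial with $s'$ nonzero terms) can have at most $s' - 1$ sign changes in its coefficient sequence, hence at most $s' - 1$ positive real roots. Consequently, there is no nonzero coefficient vector $c \in \mathbb{R}^{s'}$ for which $P$ vanishes simultaneously at the $s'$ distinct positive points $\{y_k : k \in T'\}$, so $Mc = 0$ forces $c = 0$. Therefore $M$ is nonsingular, which implies $D^{(i)}_S$ has full column rank $s'$, and so $\sigma_{\min}(D^{(i)}_S) > 0$.

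Finally, since there are only finitely many column subsets $S \subset \{0, \ldots, p\}$ with $|S| \leq s$, I would set
\begin{equation*}
\delta := \min_{S \,:\, |S| \leq s}\, \sigma_{\min}(D^{(i)}_S) > 0,
\end{equation*}
which yields the claimed uniform bound $\inf_{u \in \mathbb{R}^{|S|}} \|D^{(i)}_S u\|_2 / \|u\|_2 \geq \delta$ simultaneously over all such $S$. The only conceptual point of the argument is the reduction to Descartes' rule for monomial polynomials on the positive half-line; the rest is bookkeeping, namely recognizing that positivity of the generators is what upgrades the ordinary Vandermonde criterion (distinctness) to one that also handles nonconsecutive exponent subsets $S$. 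If the sampled points were allowed to take both positive and negative values, the Descartes bound could fail when $S$ skips exponents of the same parity, which is why the hypothesis requires $s$ \emph{positive} distinct points.
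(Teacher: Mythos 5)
Your proof is correct, and its skeleton matches the paper's exactly: restrict to an $s' \times s'$ generalized Vandermonde submatrix built on distinct positive sample points, show it is nonsingular, deduce full column rank of $D^{(i)}_S$, and take a minimum over the finitely many column subsets. The one genuine difference is how the nonsingularity of the generalized Vandermonde matrix is established. The paper invokes the fact that a generalized Vandermonde matrix with distinct positive generators and increasing integer exponents is totally positive, hence invertible, citing Demmel and Koev. You instead give a self-contained argument via Descartes' rule of signs: a nonzero linear combination $\sum_j c_j y^{q_j}$ has at most $s'-1$ sign changes and hence at most $s'-1$ positive roots, so it cannot vanish at $s'$ distinct positive points. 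Both are standard and both are valid; your route is more elementary and has the pedagogical advantage of making explicit why positivity of the samples is the hypothesis that upgrades the ordinary Vandermonde criterion to arbitrary exponent subsets (your remark about parity and sign-symmetric sample sets is exactly the right counterexample to keep in mind), whereas the paper's citation is shorter and situates the matrix in the total-positivity literature, which also carries quantitative conditioning information beyond mere invertibility.
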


\begin{proof}
Without loss of generality, we assume that $\ell_i =  s$ and that all the points $\{ x(t^{(i)}_k;\lambda^{(i)}), k \in [s]\},$ are distinct and positive.  
Consider any subset $S \subset [\overline{n}]$ of size $|S| \leq s$; the corresponding $s \times s$ submatrix $D^{(i)}_S$ is of the form
$$ 
\left[
\begin{array}{cccc}

z_1^{a_1} & z_1^{a_2} & \dots & z_1^{a_s} \\
z_2^{a_1} & z_2^{a_2} & \dots & z_2^{a_s} \\
 &  & \dots &  \\
 z_s^{a_1} & z_s^{a_2} & \dots & z_s^{a_s}
\end{array}
\right]
$$
where $0 \leq a_1 < a_2 < \dots < a_s$ are integers and $0 < z_1 < z_2 < \dots < z_s$. This type of matrix is a so-called \emph{generalized Vandermonde matrix} and is known to be a totally positive matrix, hence invertible (see, e.g. \cite{demmel2005accurate}). Thus, 
 $\inf_{u \in \mathbb{R}^s} \frac{\| D^{(i)}_S u \|_2 }{ \| u \|_2} \geq \delta_S > 0.$
 Since there are finitely many such subsets, we take $\delta$ to be the minimum $\delta_S$ over all subsets to obtain 
  $$\min_{S: |S| \leq s} \inf_{u \in \mathbb{R}^s} \frac{\| D^{(i)}_S u \|_2 }{ \| u \|_2} \geq \delta_S > 0.$$   
\end{proof}

\section{Numerical Method}\label{section:numerics}

In this section, we detail the numerical method used in this work to solve Equation~\eqref{model:nonconvex}. In particular, we use an iterative thresholding algorithm, which keeps all the coefficients above a certain threshold (determined by the $\ell^2$ norm of each row). Each iteration  contains one gradient descent step and one reduced least-squares problem. This is similar to \cite{foucart2011recovering}, which keeps the $s$-largest rows over each iteration. To the best of our knowledge, the following approach does not appear in the literature, thus for the sake of completeness, we detail the numerical method here and include the corresponding proofs in the Appendix.

For the discuss below, we rescale $D$ so that maximum spectral norm (over $i$) of the matrices $(D^{(i)})^TD^{(i)}$ is less than or equal to 1. For simplicity, we drop the subscript $j$ in Equation~\eqref{model:nonconvex}. Denote the product between $D$ and $A$ by:
\[D\star A:=\left[D^{(1)} A_{- \,,1},\, D^{(2)} A_{-,2},\, \cdots, D^{(m)} A_{-,m}\right]\in \mathbb{R}^{\ell_1} \times \mathbb{R}^{\ell_2}\times\cdots\times \mathbb{R}^{\ell_m},\quad \text{for}\quad A\in\mathbb{R}^{\overline{n}\times m}\]
and define $V :=\left[V^{(1)},\, V^{(2)},\,\cdots, V^{(m)}\right]$. Then we can replace the least-squares term in Equation~\eqref{model:nonconvex} by:
 \[\|D\star C-V\|_2^2: = \sum\limits_{i=1}^m\| D^{(i)} c^{(i)} -V^{(i)} \|_2^2.\]
Let $F$ be the objective function in Equation~\eqref{model:nonconvex}:
 \begin{equation}
F(C) := \| D\star C -V \|_2^2 +  \gamma \|C\|_{2,0}.
\end{equation}
and let $F^*$ be a surrogate function for $F$:
 \begin{equation}
F^*(C,B) := \| D\star C-V\|_2^2 -\| D\star (C -B) \|_2^2  +\|C -B\|_2^2+  \gamma \|C\|_{2,0}.
\label{eq:sur}
\end{equation}
These functions agree when $B=C$, i.e. $F^*(C,C)=F(C)$. The numerical scheme is based on minimizing the surrogate function $F^*$. Equation~\eqref{eq:sur} can be simplified to:
 \begin{align*}
F^*(C,B) &= \|C\|_2^2 - 2\left<C, B + D^T\star (V- D\star B)\right> + \gamma \|C\|_{2,0} + \|B\|_2^2 + \|V\|_2^2 - \|D\star B\|_2^2, 
\end{align*}
where $\left<\cdot,\cdot\right>$ is the summation of the component-wise multiplication of each block.
To minimize the surrogate function with respect to $C$, we only need to minimize the first three terms. The $\ell^{2,0}$ penalty is row-separable, thus we can consider the two case for each row of $C$: either the row is zero or nonzero. Denote the support set by:
\[S: =\{k: \|C_{k,-}\|_2\neq0\}\]
and let $C_{S}$ be the coefficients restricted onto the support set. Then on rows $S$, the penalty is constant and the minimizer satisfies: 
 \begin{equation}
C_S = \left(B + D^T\star (V - D\star B)\right)_S.
\label{eq:form1}
\end{equation}
To decrease the surrogate function, one chooses between Equation~\eqref{eq:form1} or setting the row to zero (this is verified in the Appendix). This process yields:
 \begin{equation}
C = H_{\sqrt{\gamma}}\left(B + D^T\star (V - D\star B)\right),
\label{eq:minimizer}
\end{equation}
where the thresholding function is defined as:
 \begin{equation}
H_{a}(x)=\begin{cases}
&0,\quad\text{if} \quad \|x\|_2 \leq a\\
& x,\quad\text{otherwise}.
\end{cases}
\label{eq:thresholding}
\end{equation}
and applied row-wise to a matrix. We define an iterative thresholding algorithm using Equations~\eqref{eq:minimizer} and \eqref{eq:thresholding} by:
 \begin{equation}
C^{k+1} = H_{\sqrt{\gamma}}\left(C^{k} +D^T \star \left(V  - D\star C^{k}\right)\right).
\label{eq:updatematrix1}
\end{equation}
 Like many proximal descent methods, Equation~\eqref{eq:updatematrix1} may converge slowly in practice. To adjust the convergence rate, we include an additional step:
 \begin{align}
\boxed{ \begin{cases}
S^{k+1} &= \text{supp}\left(H_{\sqrt{\gamma}}\left(C^{k} +D^T \star \left(V  - D\star C^{k}\right)\right) \right) \\
C^{k+1}&= \argmin{C}\, \| D\star C-V \|_2^2 \quad \text{s.t.} \ \ \text{supp}(C)\subset S^{k+1}
\end{cases}}
\label{eq:updatematrix}
\end{align}
Note that the second step is column-wise separable and the row-support set of each column of $C^{k+1}$ is a subset of $S^{k+1}$, i.e. $\text{supp}(c^{(i)})\subset S^{k+1}$. Therefore, we can solve each reduced least-squares problem in parallel. Indeed, for each column $i$, we solve
\begin{equation}
c^{k+1,i} = \argmin{c}\| D^{(i)}\, c-V^{(i)} \|_2^2 \quad \text{s.t.} \ \ \text{supp}(c^{(i)})\subset S^{k+1}.
\end{equation}
A summary of the proposed algorithm is described below. 

\medskip

\noindent\fbox{%
\begin{minipage}{\dimexpr\linewidth-2\fboxsep-2\fboxrule\relax}
\begin{algorithmic}
\State \underline{\textbf{Group Hard-Iterative Thresholding Algorithm for Dynamical Systems}}

\medskip

\State Given: initialization matrix ${C}^0, tol $ and parameters $\gamma$.
\medskip
\While{$\|C^{k+1}-C^{k}\|_{\infty}> tol$}
\medskip
\State \textbf{for} $i = 1$ \textbf{to} $m$:
\medskip
\State \hspace{0.5 cm} $\left(\widetilde{c^{(i)}}\right)^ {k+1}= \left(c^{(i)}\right)^ {k} -  (D^{(i)})^T\left(D^{(i)}\left(c^{(i)}\right)^{k} - V^{(i)}\right)   $
\medskip
\State \textbf{end for}
\medskip

\State $S^{k+1} = \text{supp}\left(H_{\sqrt{\gamma}} \left[\widetilde{c^{(1)}}, \widetilde{c^{(2)}},\cdots,\widetilde{c^{(m)}}\right]\right) $

\medskip

\State  \textbf{for} $i = 1$ \textbf{to} $m$:
\medskip
\State\hspace{0.5 cm} $(c^{(i)})^{k+1}= \argmin{c^{(i)}}\| D^{(i)} c^{(i)} -V^{(i)} \|_2^2 \quad \text{s.t.} \ \ \text{supp}(c^{(i)})\subset S^{k+1}$.
\medskip
\State \textbf{end for}
\medskip
 \EndWhile
\end{algorithmic}
\end{minipage}%
}
 
\medskip

To give an indication of the behavior of the modified scheme for $\ell^{2,0}$ regularized least-squares minimization, we have the following theorem. 

\begin{theorem}
Let $F= \| D\star C -V \|_2^2 +  \gamma \|C\|_{2,0}$ and let $C^n$ be the sequence generated by Equation~\eqref{eq:updatematrix}, then $F(C^{n+1}) \leq F(C^{n})$ and there are subsequences that converge to local minimizers. In addition, if $D$ is coercive then the sequence $C^{n}$ converges to a local minimizer.
\label{thrm:convergence}
\end{theorem}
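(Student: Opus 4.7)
The plan is to use the standard majorization--minimization framework built around the surrogate $F^{*}$ in \eqref{eq:sur}. The rescaling $\|(D^{(i)})^{T}D^{(i)}\|_{2}\le 1$ yields $\|D\star W\|_{2}^{2}\le\|W\|_{2}^{2}$ for every block matrix $W$, so
\[
F^{*}(C,B)-F(C)\;=\;\|C-B\|_{2}^{2}-\|D\star(C-B)\|_{2}^{2}\;\ge\;0,
\]
with equality when $B=C$. Hence $F^{*}(\,\cdot\,,B)$ majorizes $F$ and agrees with it at $B$. The remainder of the argument splits into monotonicity, subsequential convergence, and full-sequence convergence under coercivity.

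For monotonicity, completing the square gives $F^{*}(C,C^{k})=\|C-\bar C^{k}\|_{2}^{2}+\gamma\|C\|_{2,0}+\mathrm{const}$, where $\bar C^{k}:=C^{k}+D^{T}\!\star\!(V-D\star C^{k})$. This splits row-by-row, so the hard-thresholded iterate $\widetilde C^{k+1}=H_{\sqrt{\gamma}}(\bar C^{k})$ is a global minimizer, giving $F(\widetilde C^{k+1})\le F^{*}(\widetilde C^{k+1},C^{k})\le F^{*}(C^{k},C^{k})=F(C^{k})$. The restricted least-squares step then produces $C^{k+1}$ with $\mathrm{supp}(C^{k+1})\subset S^{k+1}=\mathrm{supp}(\widetilde C^{k+1})$, so $\|C^{k+1}\|_{2,0}\le\|\widetilde C^{k+1}\|_{2,0}$, while $\|D\star C^{k+1}-V\|_{2}^{2}\le\|D\star \widetilde C^{k+1}-V\|_{2}^{2}$ by optimality. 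Chaining these yields $F(C^{k+1})\le F(C^{k})$, proving the first claim.

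Since $F\ge 0$ and $F(C^{k})$ is non-increasing, $F(C^{k})$ converges. The support sequence $\{S^{k}\}$ lies in the finite power set of $[\bar n]$, so some $S^{\star}$ recurs infinitely often; along that subsequence each iterate is a restricted LS minimizer on the fixed support $S^{\star}$ (which, under the standard minimum-norm convention, is a single vector, giving a trivially convergent constant subsequence). A limit point $C^{\star}$ satisfies (i) $D^{T}\!\star\!(V-D\star C^{\star})$ vanishes on $S^{\star}\supset\mathrm{supp}(C^{\star})$ from LS first-order optimality, and (ii) $\|(D^{T}\!\star\!(V-D\star C^{\star}))_{k}\|_{2}\le\sqrt{\gamma}$ for $k\notin S^{\star}$ because no new row survives the threshold. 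These are exactly the conditions needed to show $C^{\star}$ is a local minimizer of $F$: perturbations preserving the support only raise the quadratic residual, and activating any row $k\notin S^{\star}$ adds $\gamma$ to the penalty that the residual cannot recoup, by (ii).

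The hardest part is promoting subsequential to full-sequence convergence under coercivity. The plan is to derive a quantitative descent inequality $F(C^{k})-F(C^{k+1})\ge c\,\|C^{k+1}-C^{k}\|_{2}^{2}$ from the strong convexity of $F^{*}(\,\cdot\,,C^{k})$ in its first argument together with the extra descent supplied by the LS step; telescoping then yields $\sum_{k}\|C^{k+1}-C^{k}\|_{2}^{2}<\infty$, so $C^{k+1}-C^{k}\to 0$. Because a row becomes active only when $\|\bar C^{k}_{k}\|_{2}$ exceeds $\sqrt{\gamma}$, any activation induces a jump of size at least $\sqrt{\gamma}$; combined with $C^{k+1}-C^{k}\to 0$ this forces the support to stabilize after finitely many steps. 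Once the support is fixed, coercivity of $D$ makes the restricted LS solution unique, so $C^{k}$ is constant for all large $k$, giving convergence of the entire sequence to a local minimizer. The main obstacle is precisely the discontinuity of $\|\cdot\|_{2,0}$, which forbids continuous descent arguments across support changes; the jump-size argument via the hard threshold is what bridges this gap.
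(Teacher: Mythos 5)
Your proposal follows essentially the same route as the paper: the surrogate $F^{*}$ of Equation~\eqref{eq:sur} majorizes $F$ under the spectral rescaling, the row-wise hard threshold exactly minimizes $F^{*}(\cdot,C^{k})$, the restricted least-squares step can only further decrease $F$ (same support, smaller residual by the Pythagorean identity), and under coercivity the two per-iteration descent estimates telescope to give $\sum_{k}\|C^{k+1}-C^{k}\|_{2}^{2}<\infty$. Your endgame is in fact more explicit than the paper's, which at that point simply invokes a row-wise analogue of Lemma~3.4 of Blumensath--Davies; your support-stabilization argument via the $\sqrt{\gamma}$ threshold is the right idea and buys a self-contained finish. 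Two small repairs are needed there: (i) activating a row $j\notin S^{k}$ guarantees that the $j$th row of $\widetilde{C}^{k+1}$ has norm exceeding $\sqrt{\gamma}$, i.e.\ a jump of size at least $\sqrt{\gamma}$ in $\widetilde{C}^{k+1}-C^{k}$, \emph{not} in $C^{k+1}-C^{k}$, since the subsequent least-squares re-fit may shrink that row again; the argument should therefore be run on $\|\widetilde{C}^{k+1}-C^{k}\|_{2}\to 0$, which the first of your two telescoped bounds already delivers. (ii) Deactivating a row produces no such jump (an active row may have arbitrarily small norm), so the jump argument only rules out activations; you then conclude by noting that once activations cease the support is non-increasing and hence eventually constant, after which coercivity makes the restricted least-squares solution unique and the iterates freeze, as you say. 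With those two sentences added, the argument is complete and matches the paper's.
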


The proof is in the appendix and follows a similar approach to \cite{blumensath2008iterative}. When the sparsity level $s$ can be determined or estimated \textit{a priori}, one could use a modified method based on the group thresholding method of \cite{foucart2011recovering}. Specifically, at every iteration, we keep $ks$ indices, where $k>1$, corresponding to $ks$-largest rows with respect to the $\ell^2$ norm and solve the linear regression on that subset. At the final step, we keep exactly $s$ indices instead of $ks$ and follow the same process. Although similar in nature, the sequential thresholding algorithm found in \cite{brunton2016discovering} differs from the proposed  algorithm. In particular, the thresholding here is performed on a gradient descent step rather than the psuedo-inverse.

\section{Computational Results}\label{section:results}

\textbf{Logistic Equation.} For the first computational text, the proposed model (Equation~\eqref{model:nonconvex}) is applied to data generated from the 1D logistic equation
\begin{equation}
\dot{x} = f(x) :=\alpha x(1-x), \quad t\in[0,50.0],\quad x(0) = 0.01,\label{eqn:logistic}
\end{equation}
 where $\alpha$ is the bifurcation parameter. The simulated data are obtained from two sets, which are generated from Equation~\eqref{eqn:logistic} with $\alpha = 0.05$ and $\alpha=0.23$, respectively.
For both data sets, we set the time-step to $dt = 0.005$. The simulated data and the noisy velocities are plotted in Figure \ref{fig:logistic}. In all of our examples, the velocity data $V^{(i)}$ are approximated from $X^{(i)}$ using the central difference.

 \begin{figure}
 \centering
 \includegraphics[width = 2.5 in]{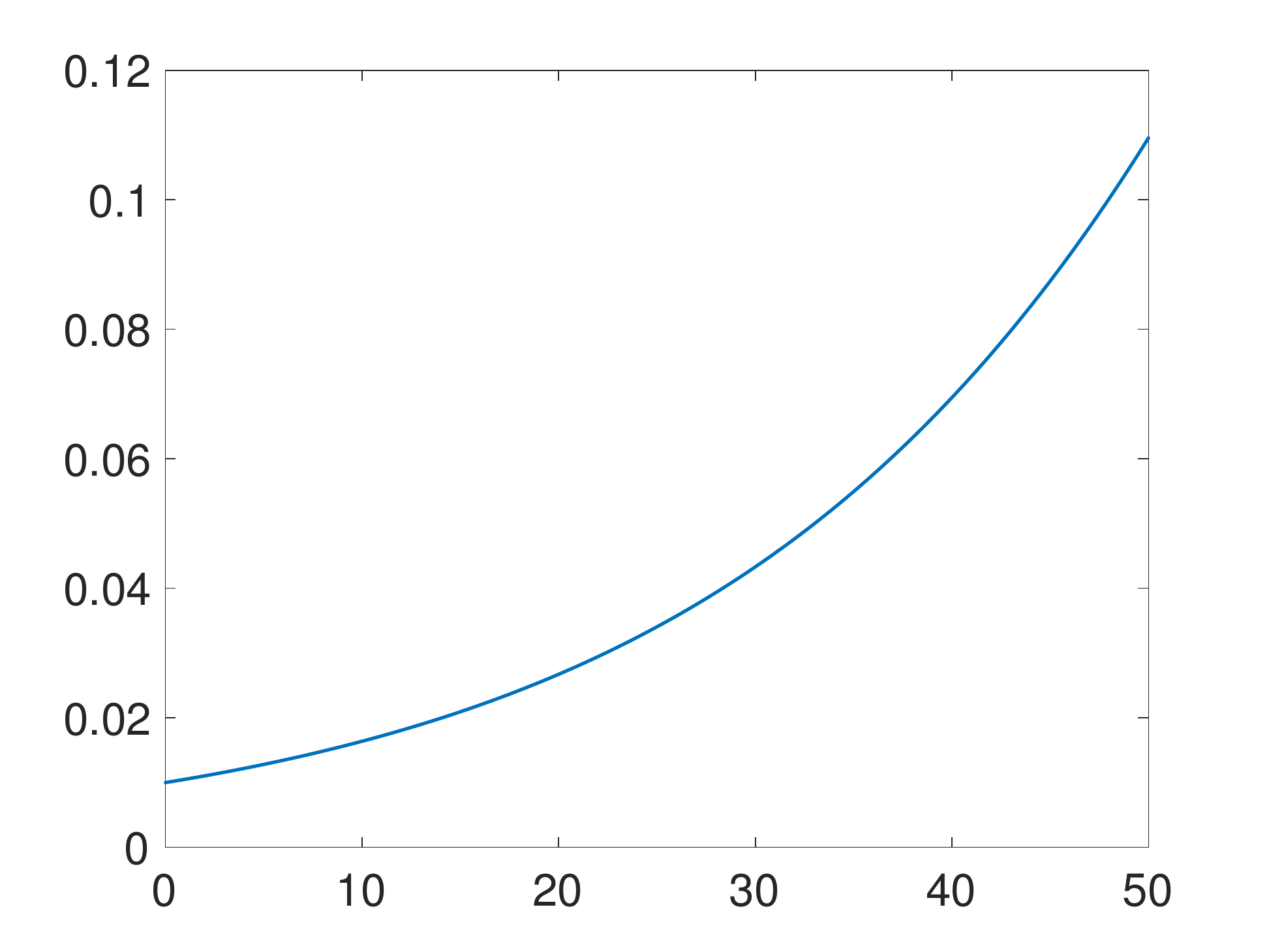}
 \includegraphics[width = 2.5 in]{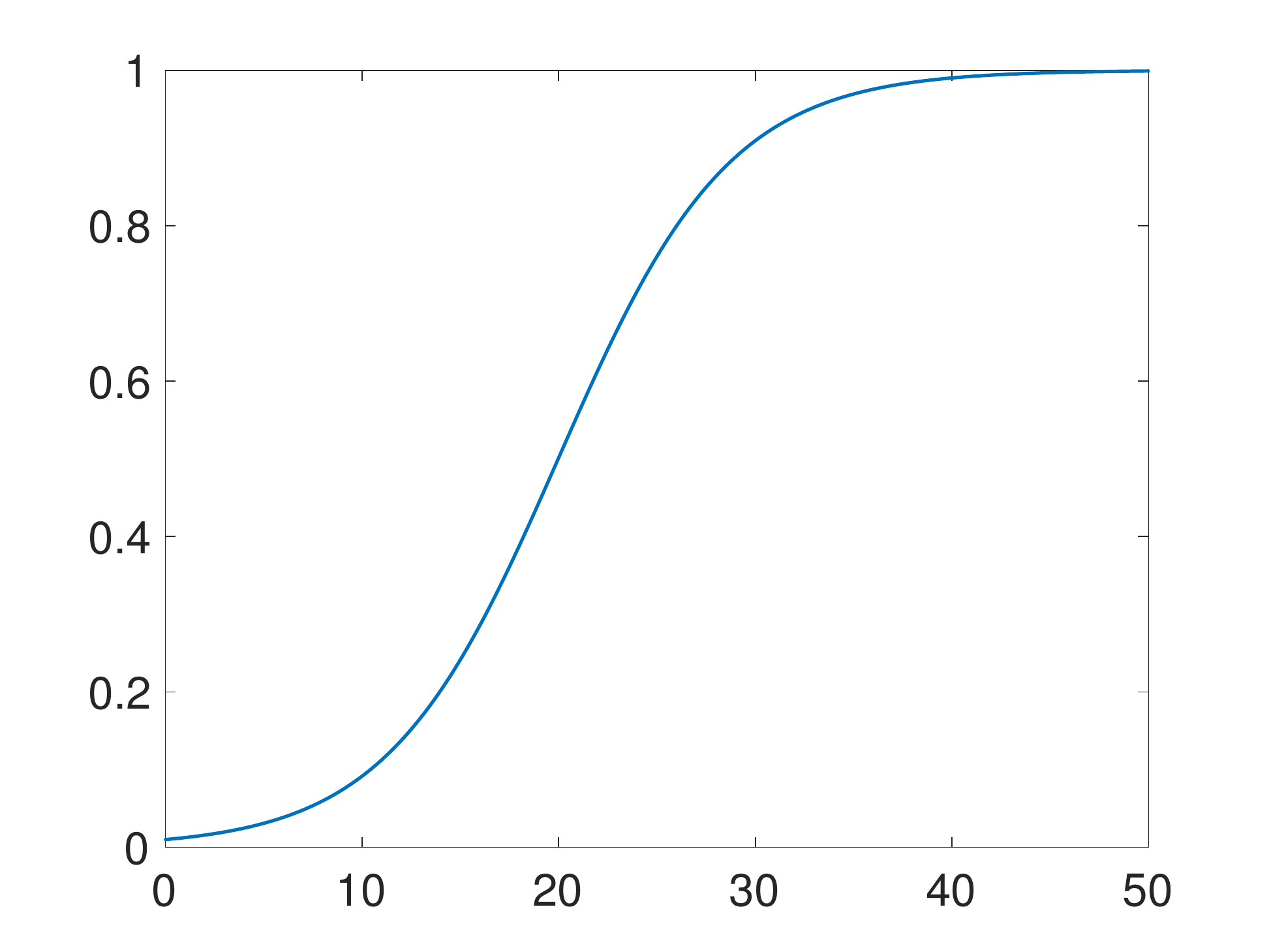}

 \includegraphics[width = 2.5 in]{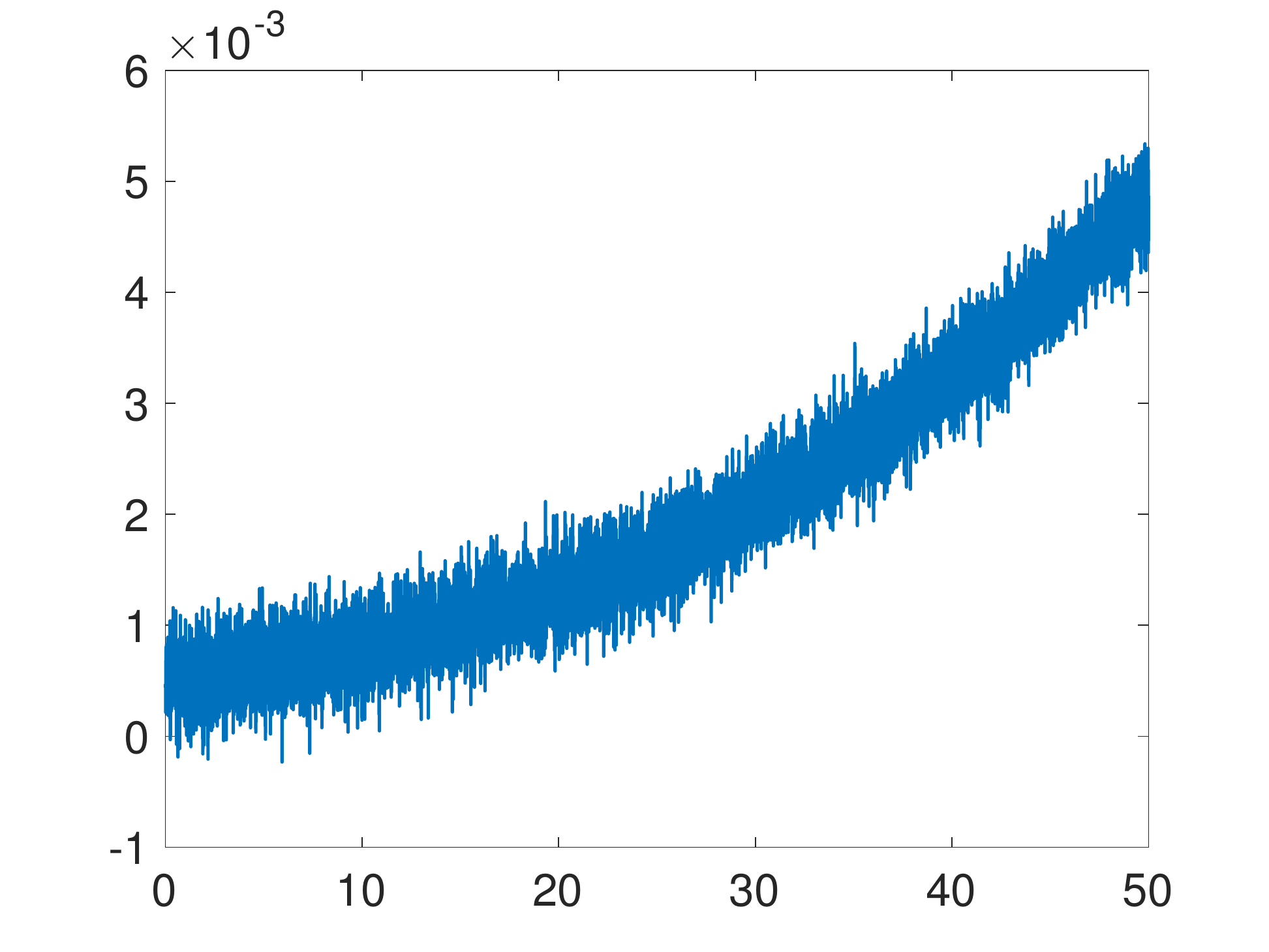}
 \includegraphics[width = 2.5 in]{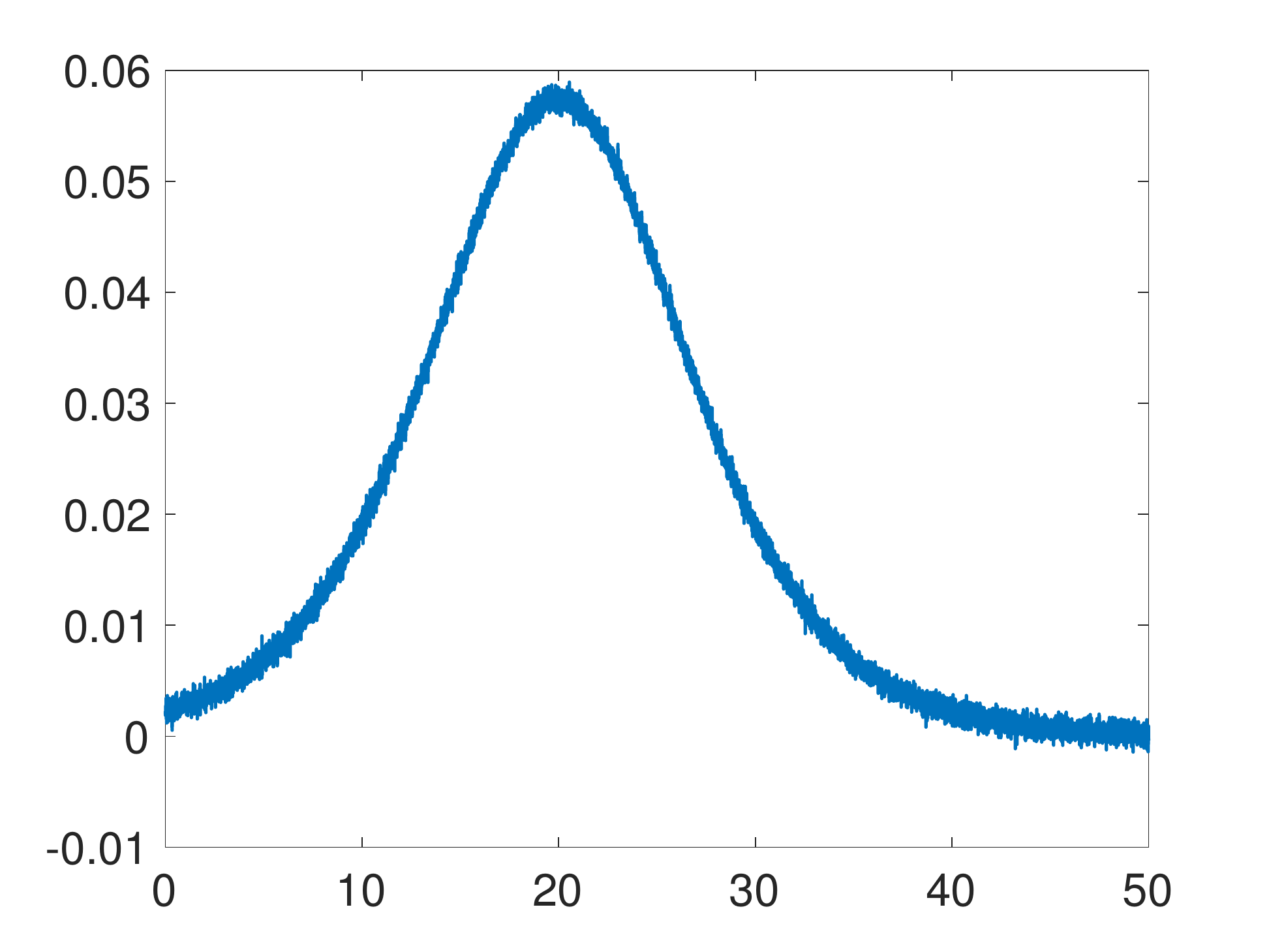}
  \caption{Logistic equation: State space (top) and noisy velocity space (bottom) plots for $\alpha= 0.05$ (left) and $\alpha = 0.23$ (right). The noise levels are $\sigma_{noise} =0.05\%$ and $\sigma_{noise} = 0.01\%$, respectively. The maximal degree of monomials in the dictionary is six. }
 \label{fig:logistic}
 \end{figure}

To test the robustness of our proposed model, we re-simulate the data 100 times and compute the probability ($P\in[0,1]$) of recovering the correct terms in the governing equation. We use a fixed thresholding parameter of $\delta_{thres} = 0.0018$. With the group-sparsity penalization, our model recovers the correct governing equation with probability $P=1$, i.e.  the method learns both terms: $x$ and $x^2$. Moreover, the average relative errors between the true coefficients and our approximations are $3.04\%$ for set 1 and around $0.02\%$ for set 2.

For comparison, if one used the $\ell_0$-penalty in place of the $\ell^{2,0}$-penalty in Equation~\eqref{model:nonconvex}, then the computed probability that the $\ell_0$-penalized method recovers the governing equation reduces dramatically to $P=0.41$ and $P=0.3$, respectively. Thus, unlike our method, the $\ell_0$ model will likely misidentify the terms in the governing equation.
\bigskip

\begin{figure}[h!]
\centering
\includegraphics[width  = 2.1 in]{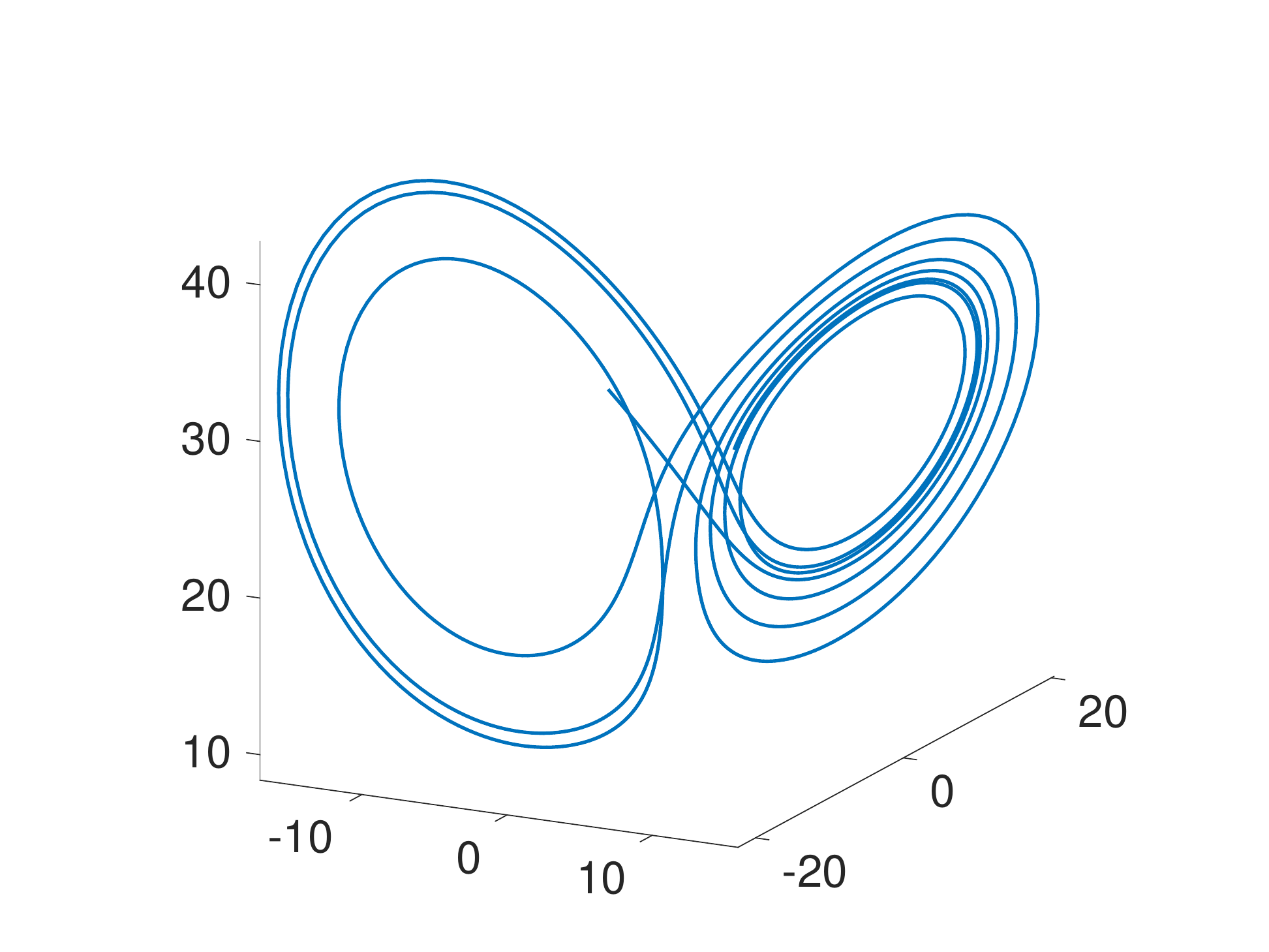}
\includegraphics[width  = 2.1 in]{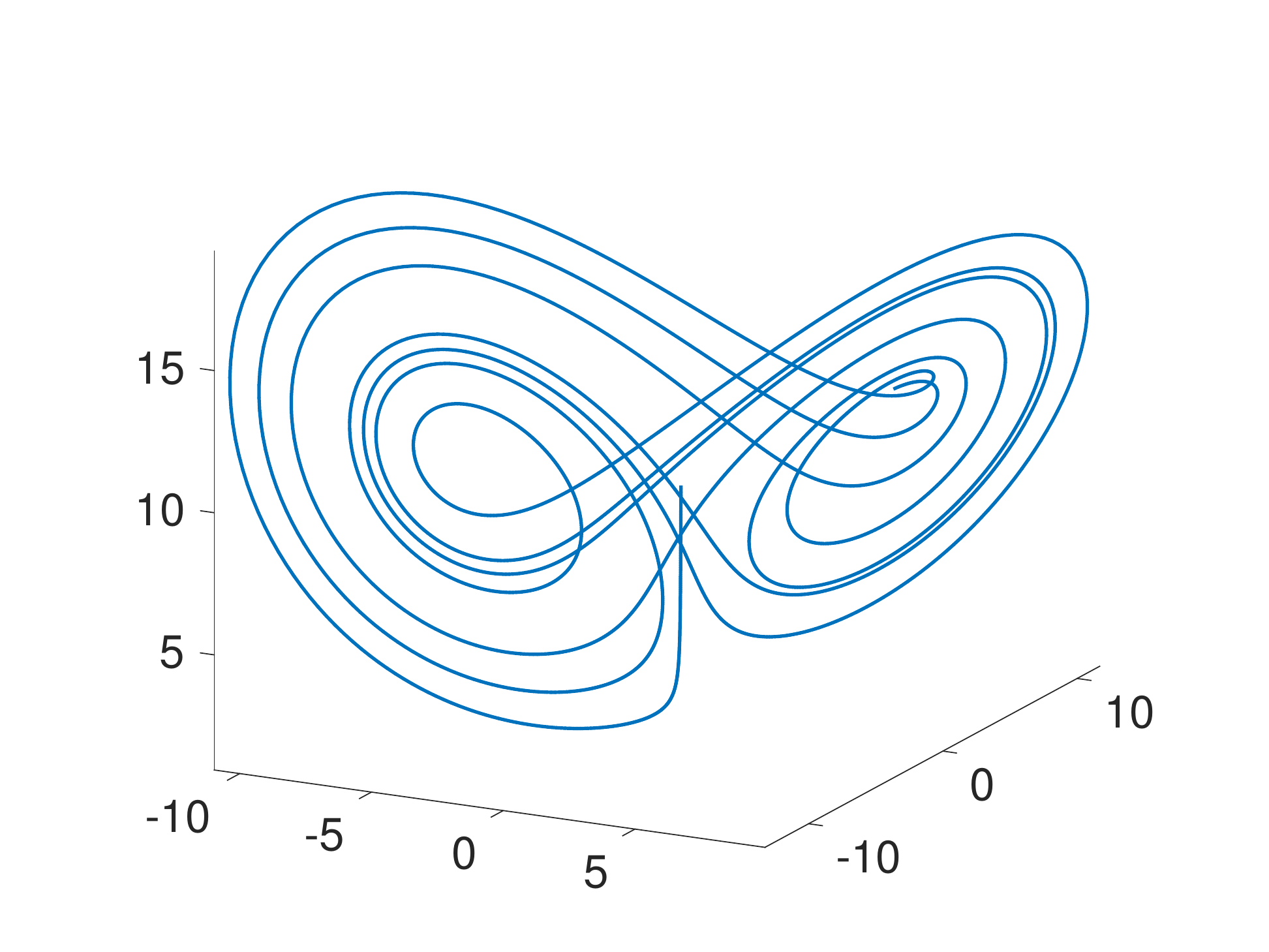}
\includegraphics[width  = 2.1 in]{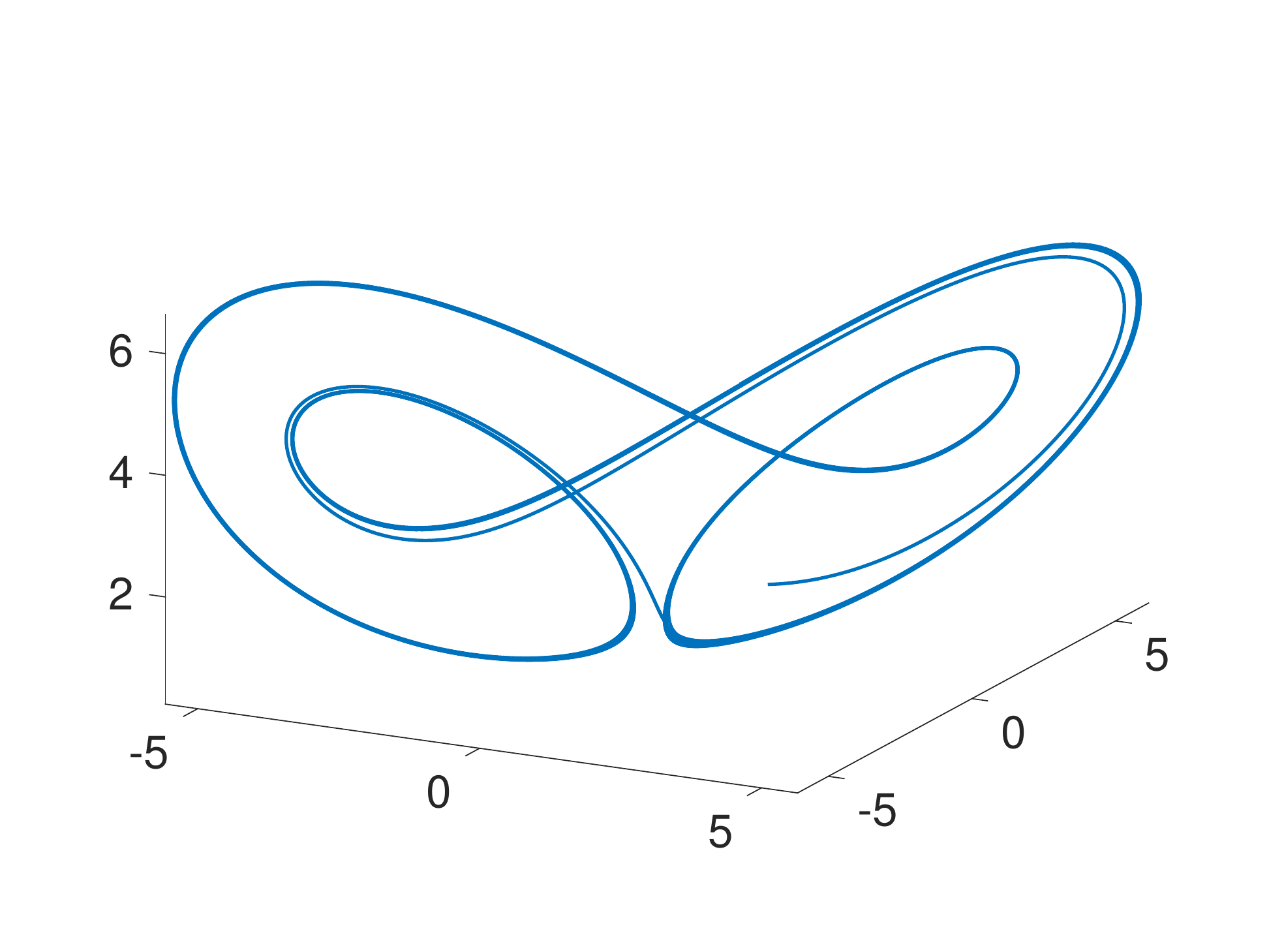}\
\includegraphics[width  = 2.1 in]{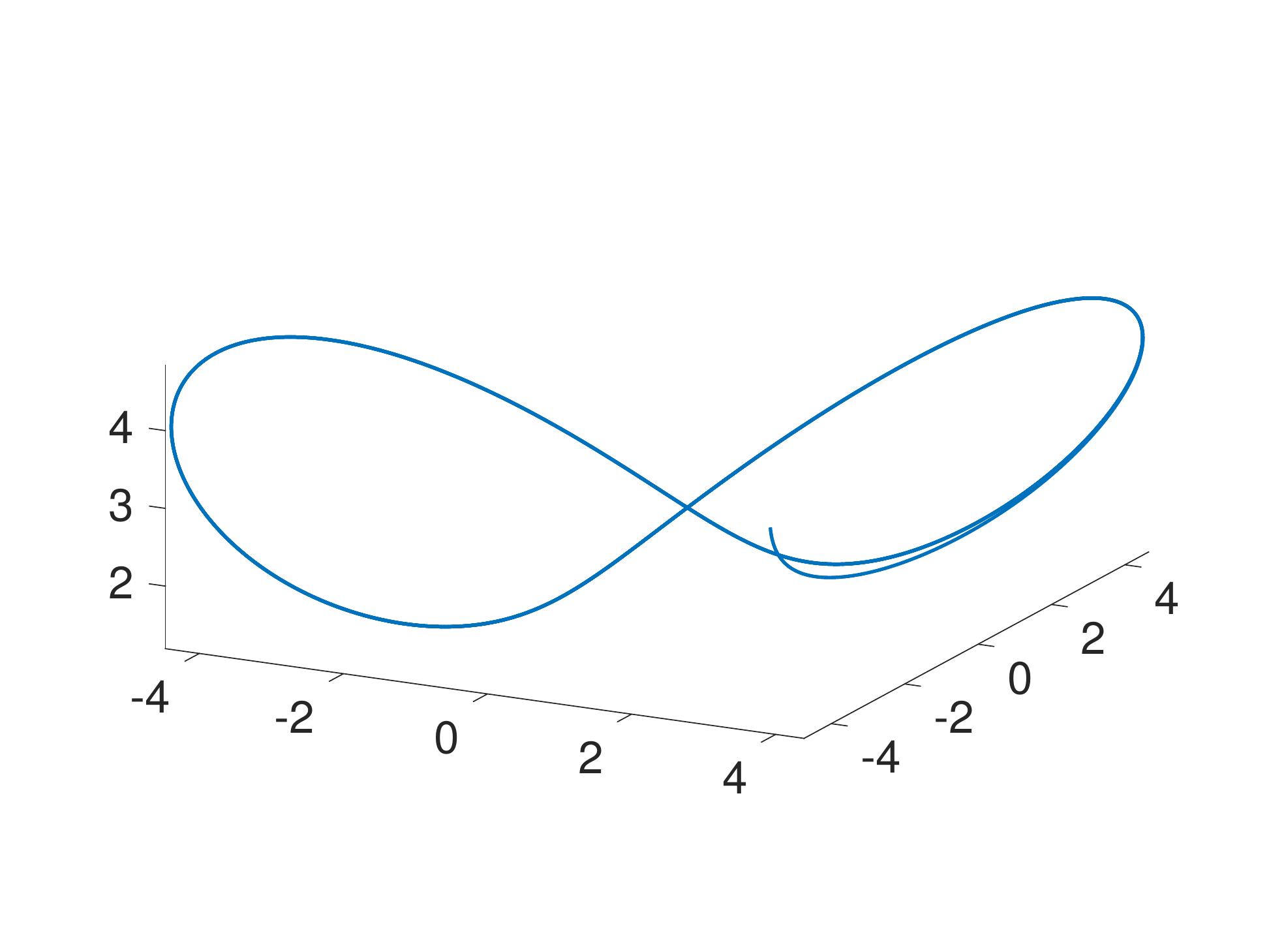}
\includegraphics[width  = 2.1 in]{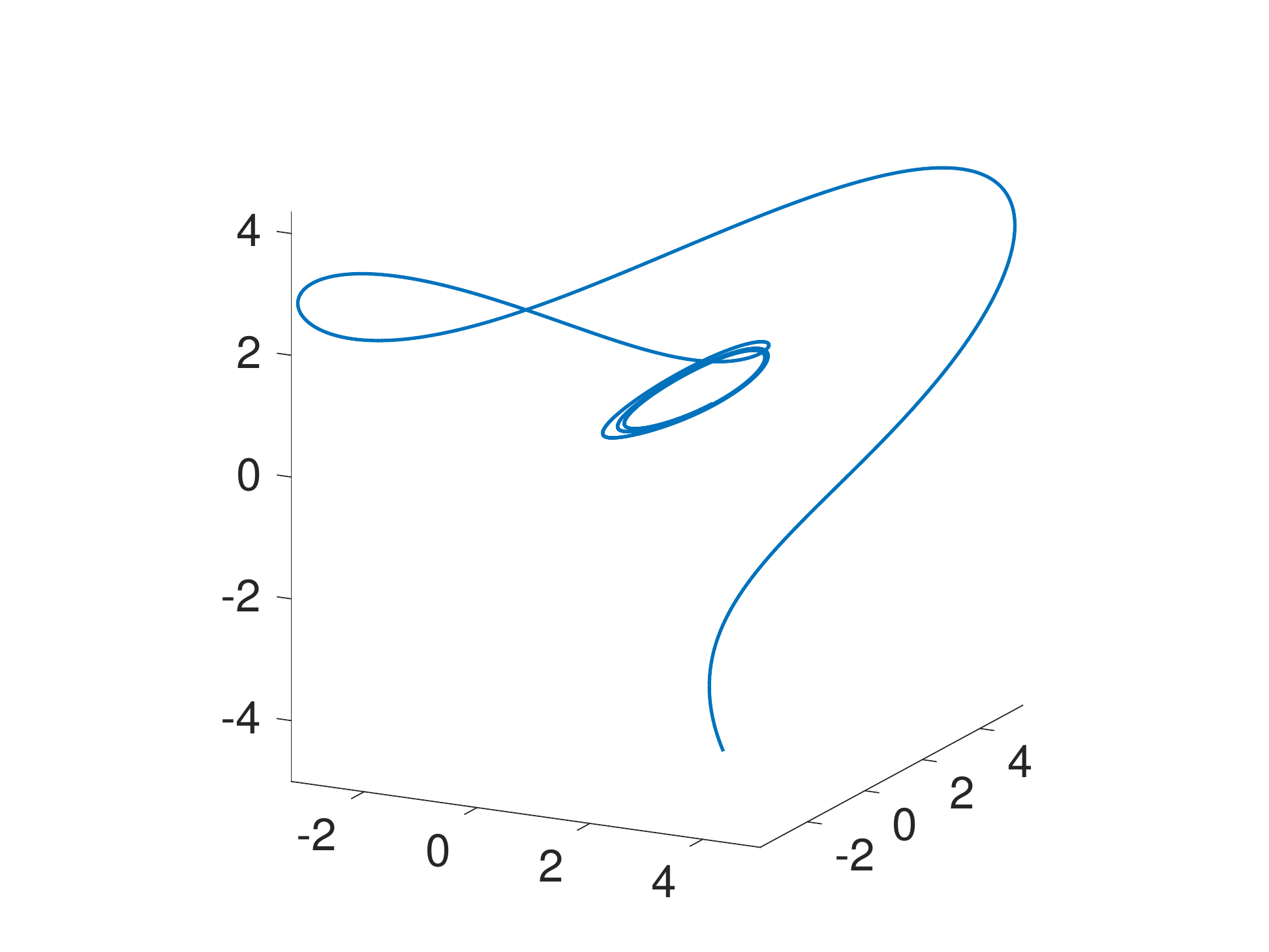}

\caption{Lorenz system: State space plots for different $\alpha$ with $dt = 0.005$. Top: $\alpha = -1$ (left), $\alpha=4.7$ (middle), $\alpha= 6.9$ (right), where the left figure is the usual chaos, the middle is chaotic with periodic windows, and the right one is chaotic. Bottom: $\alpha = 7.075$ (left), $\alpha = 7.73$ (right), where the dynamics include a pitchfork bifurcation and limit cycles, respectively.}
\label{fig:Lorenz3D}
\end{figure}

\noindent \textbf {Lorenz 3D.} We consider the Lorenz system with a single bifurcation parameter $\alpha$:
\begin{equation}
\begin{cases}
\dot{x}_1 &= 10(x_2-x_1)\\
\dot{x}_2 &= -x_1x_3 + (24-4\alpha)x_1 +x_1x_2\\
\dot{x}_3 & = x_1x_3 - \dfrac{8}{3}x_3,
 \end{cases}
 \label{eqn:lorenz}
 \end{equation}
 see \cite{sun2009dynamics}. To validate our approach, we use five data sets  from different bifurcation regimes associated with $\alpha = -1$, $\alpha = 4.7$, $\alpha = 6.9$, $\alpha = 7.075$, and $\alpha = 7.73$. For $\alpha = -1$, the solution is the usual chaotic system with the initial condition set to $U_0 = [-8,7,27]$. For $\alpha = 4.7$, the initial condition is set to $U_0 =[0,-0.01,9]$ and the system exhibits chaos with periodic windows. For $\alpha = 6.9$ with $U_0= [1,2,1]$, the solution exhibits chaos. For $\alpha = 7.075$ with $U_0=[1,1,2]$, the solutions undergoes a pitchfork bifurcation. And lastly, for $\alpha = 7.73$ with $U_0 = [2,1,-5]$, the system has a limit cycle. The time-step is set to $dt = 0.005$ and the finals times are set to: $T = 7.5$, $12.5$, $50$, $15.0 $, and $10.0$ so that the corresponding state spaces exhibit the dynamics as indicated in \cite{sun2009dynamics} (see Figure~\ref{fig:Lorenz3D}). Noise is added to the velocity, with $\sigma_{noise} = 0.5\% $, see Figure~\ref{fig:Lorenz3D_velocity}. 
 
 \medskip
 
 \begin{figure}[h!]
\centering
\includegraphics[width  = 2.1 in]{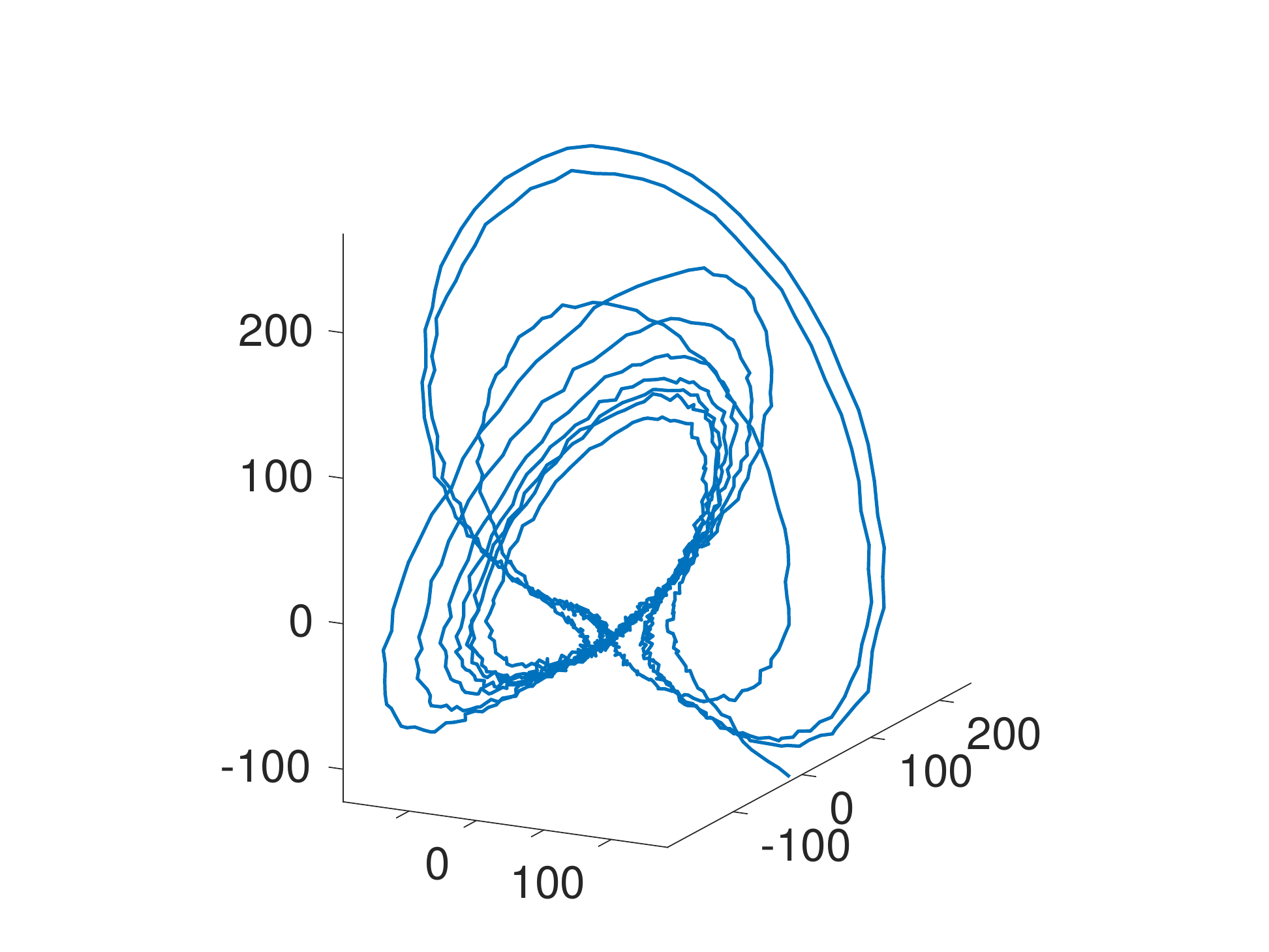}
\includegraphics[width  = 2.1 in]{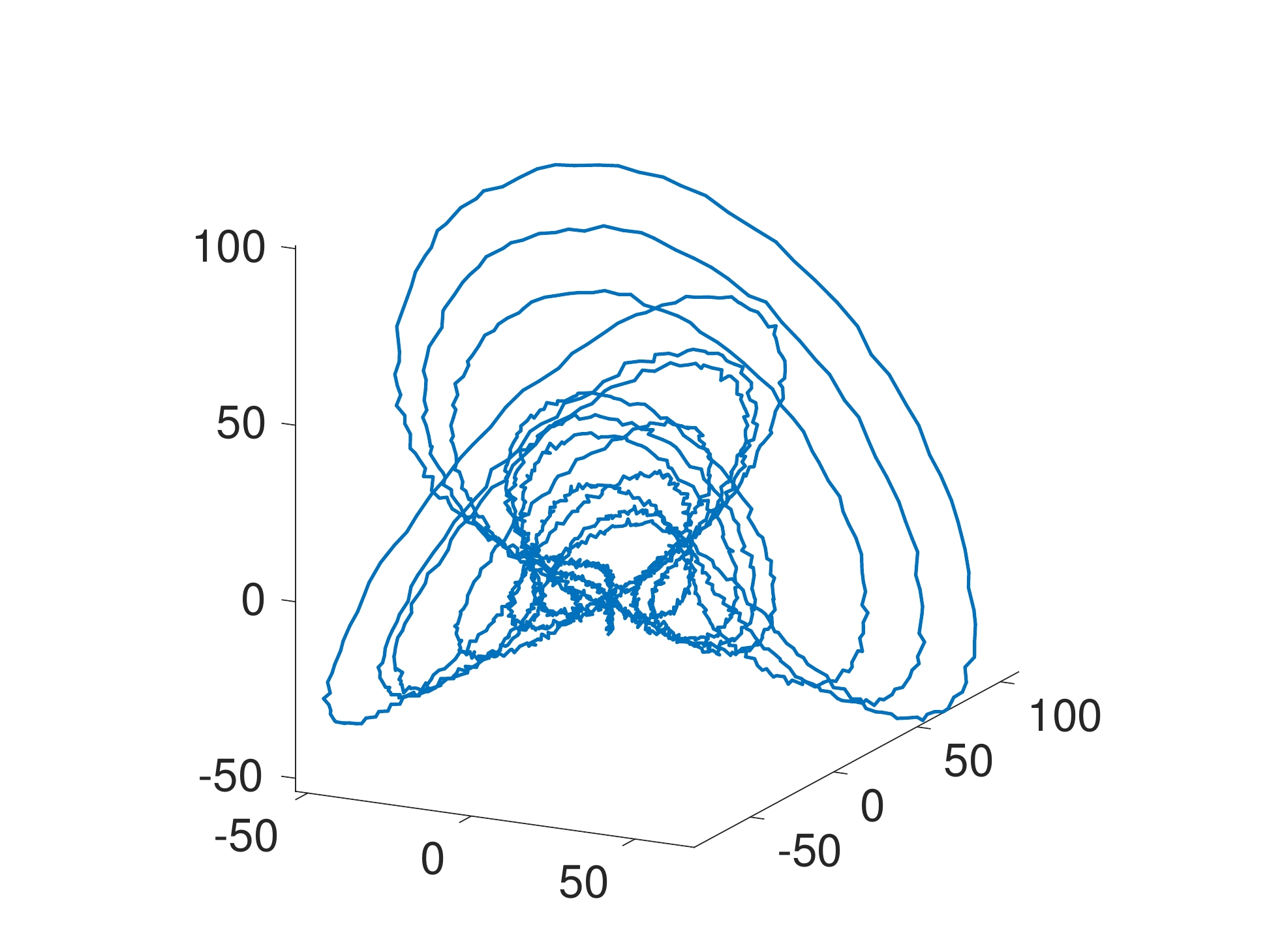}
\includegraphics[width  = 2.1 in]{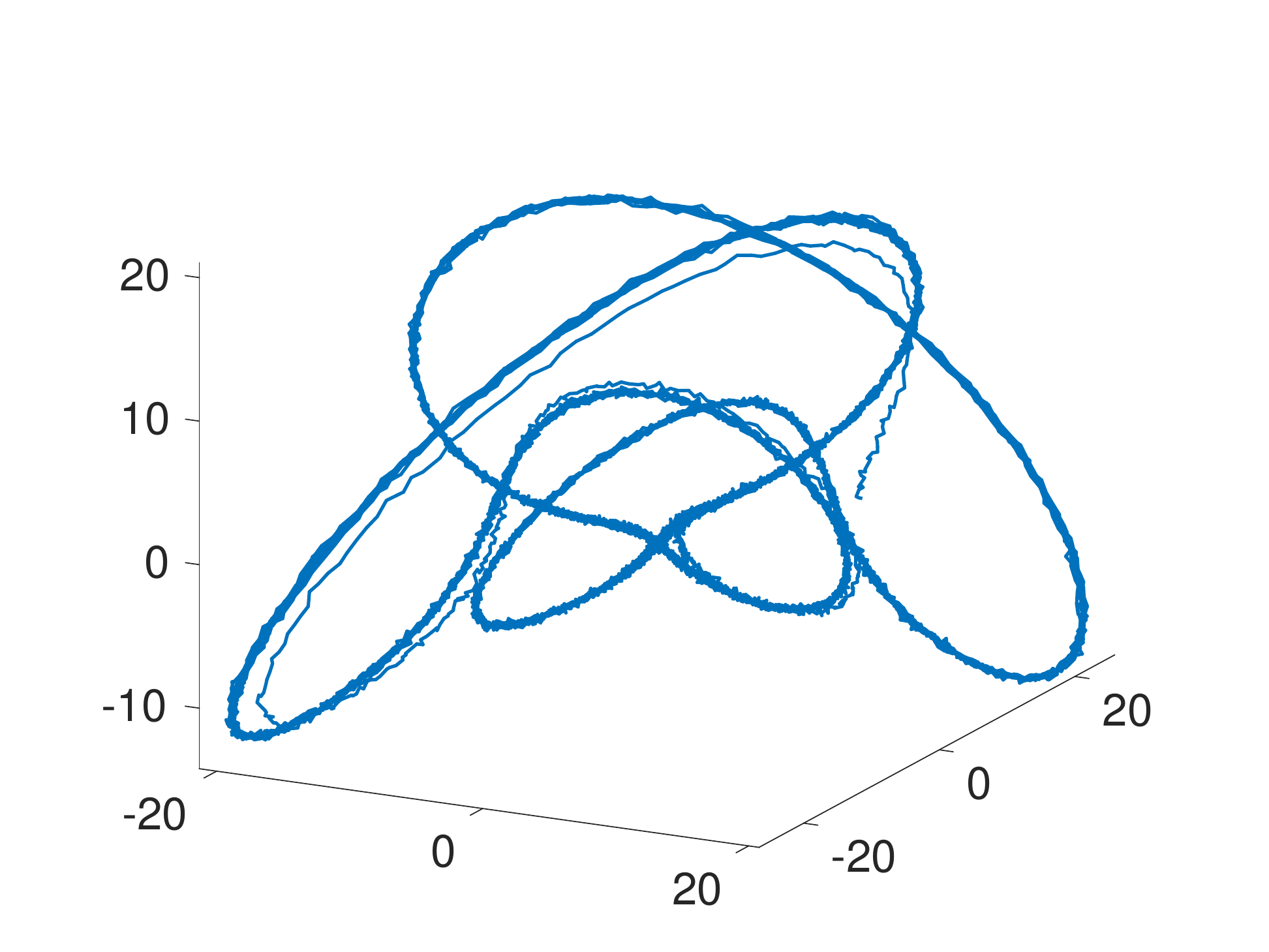}\
\includegraphics[width  = 2.1 in]{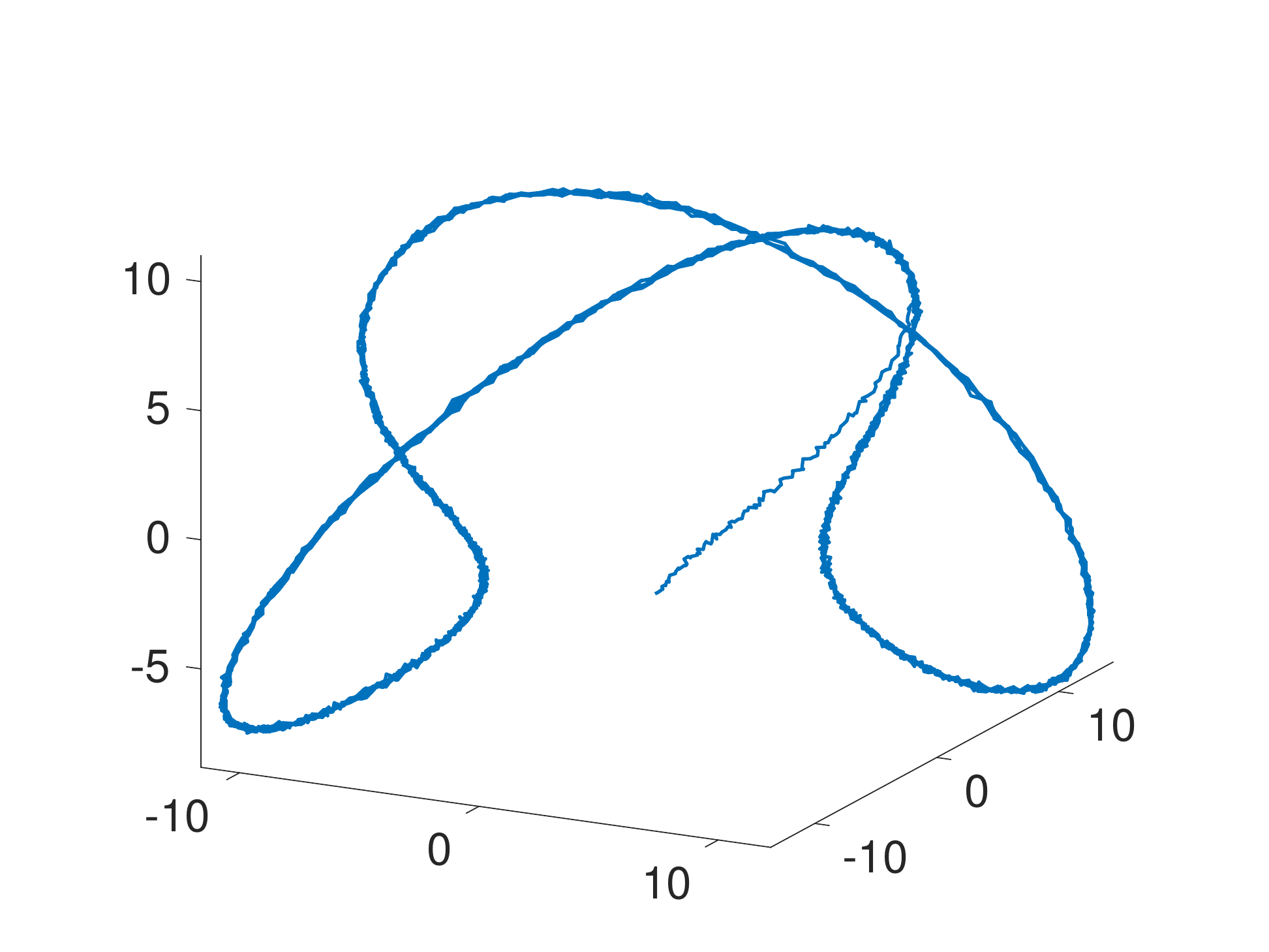}
\includegraphics[width  = 2.1 in]{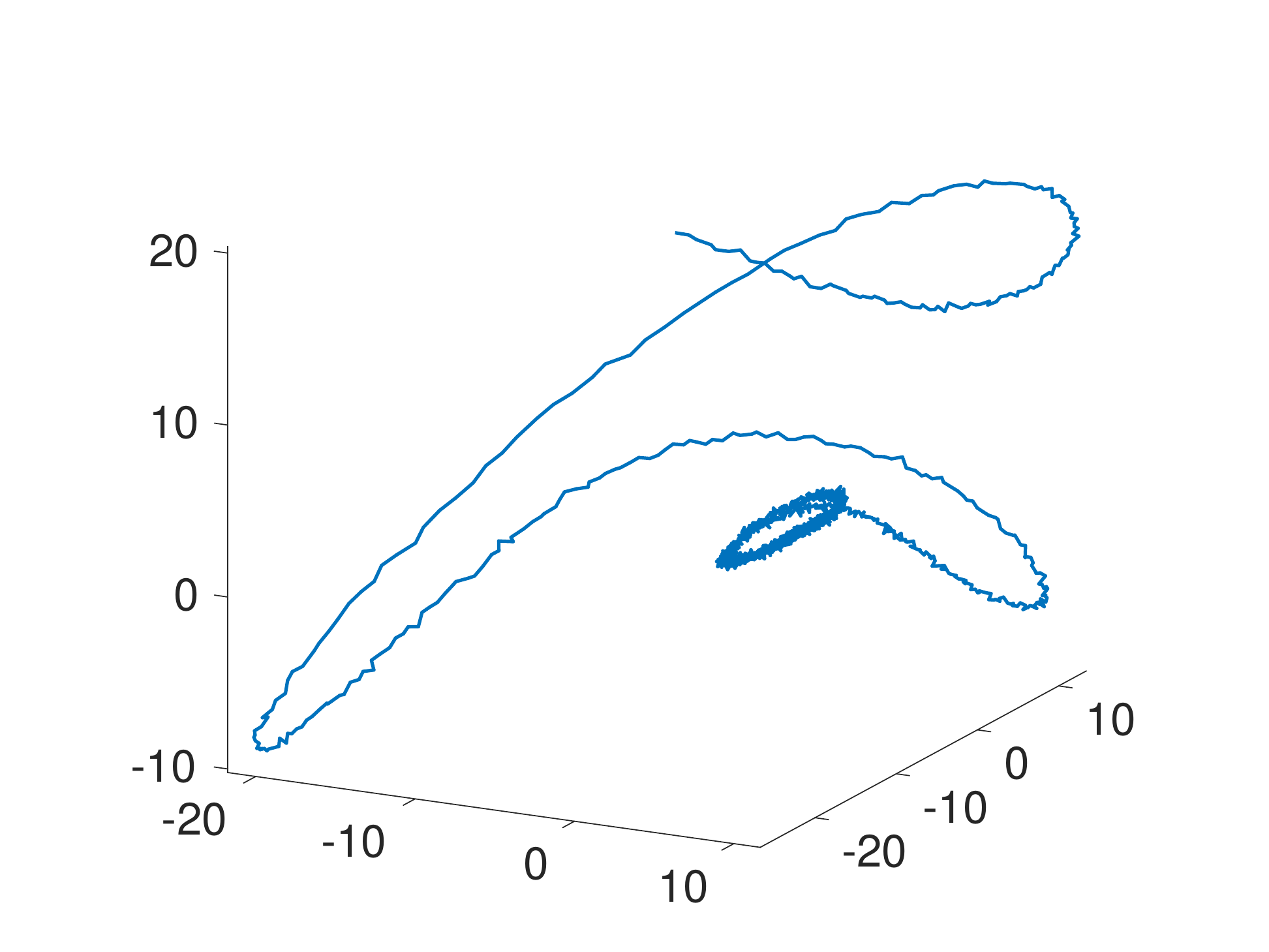}
\caption{Noisy velocity space plots corresponding to the data given in Figure~\ref{fig:Lorenz3D} with noise level $\sigma_{noise} = 0.5\%$. Top: $\alpha = -1$ (left), $\alpha= 4.7$ (middle), $\alpha= 6.9$ (right). Bottom: $\alpha = 7.075$ (left), $\alpha = 7.73$ (right). }
\label{fig:Lorenz3D_velocity}
\end{figure}

\newpage
 
Applying our algorithm to this data with $\delta_{thres} =  1.7$ yields a recovery rate of $P =0.97$. Moreover, the recovered coefficients are close to the true values, with relative error smaller than $3\%$ for $\alpha = -1$ and less than $0.1\%$ for the remaining $\alpha$'s (see Table~\ref{table:Lorenz}). On the other hand, applying the $\ell_0$-penalized model yields less consistent results,  for example in the $\alpha = 7.075$ case the recovery probability is less than $P=0.73$. 
 
\begin{center}
\captionof{table}{Lorenz system. Recovered coefficients from all five sets for the second component 
$\dot{x_2}$. The true values are highlighted in {\color{red}(red)}.} \label{table:Lorenz}
\begin{tabular}{|c|c|c|c|c|c| }
\hline
Coefficients & Set 1 & Set 2 & Set 3 & Set 4 & Set 5  \\
\hline
$1$ & 0 & 0 & 0 &0 & 0  \\
\hline
$x_1$ & 28.0232 ({\color{red} 28})& 5.2104 ({\color{red} 5.2})& -3.6068 ({\color{red} -3.6})& -4.2960 ({\color{red} -4.3})& -6.9246 ({\color{red} -6.92})\\
\hline
$x_2$ &-1.0093 ({\color{red} -1.0})& 4.6970 ({\color{red} 4.7}) & 6.9020 ({\color{red} 6.9})& 7.0719 ({\color{red} 7.075})& 7.7310 ({\color{red} 7.73}) \\
\hline
$x_3$ &0 & 0 & 0 &0 & 0 \\
\hline
$\vdots$ &$\vdots$ &$\vdots$ &$\vdots$ &$\vdots$ &$\vdots$ \\
\hline 
$x_1x_3$ & -1.0002 ({\color{red} -1}) & -1.0003 ({\color{red} -1})& -0.9989 ({\color{red} -1}) & -1.0002 ({\color{red} -1}) & -0.9992 ({\color{red} -1})\\
\hline
$\vdots$ &$\vdots$ &$\vdots$ &$\vdots$ &$\vdots$ &$\vdots$ \\
\hline
$x_3^4$ & 0 & 0 & 0 &0 & 0\\
\hline
\end{tabular}
\end{center}

\bigskip

\begin{figure}[h!]
\centering
\includegraphics[width = 4in]{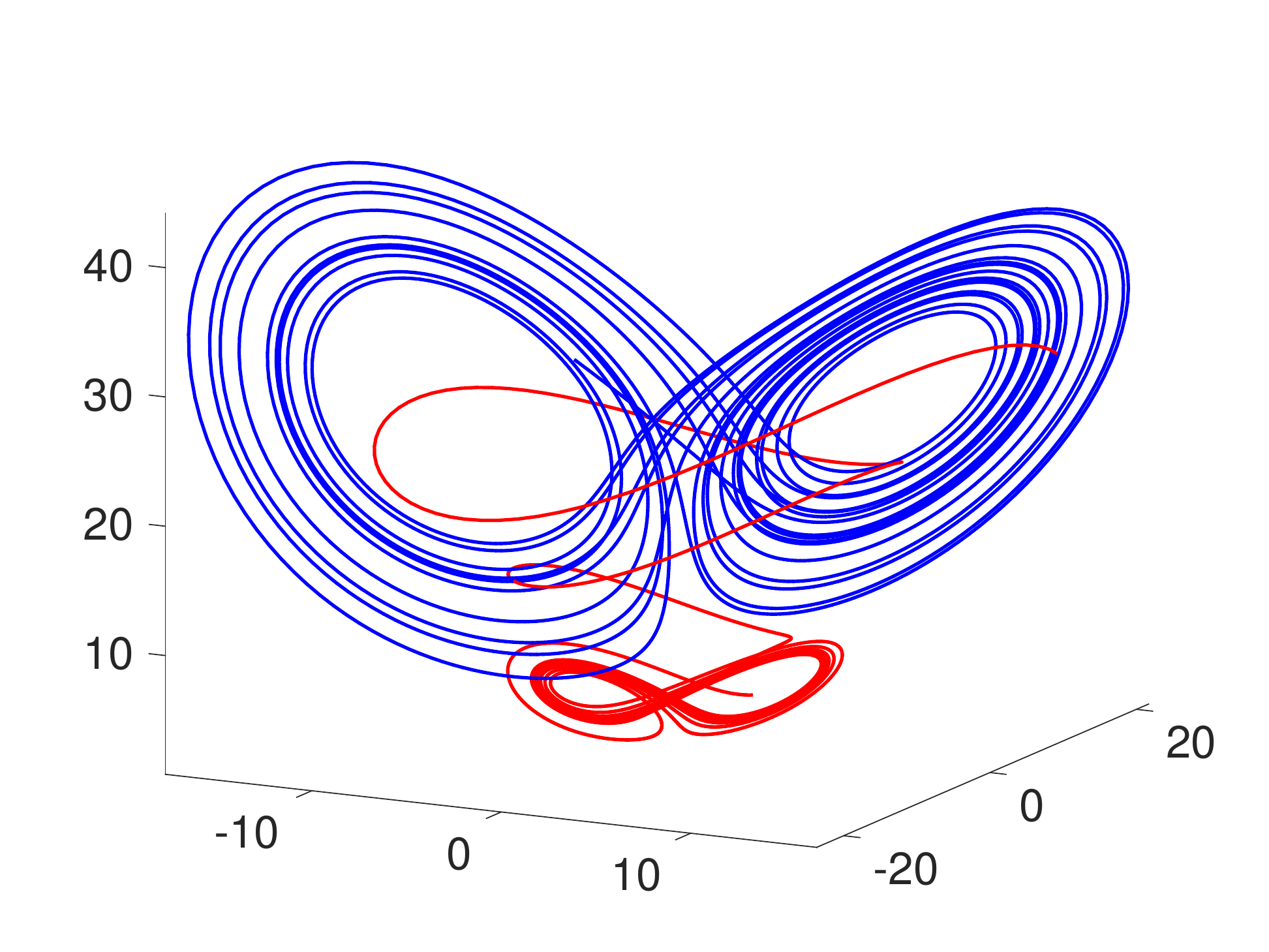}
\caption{Switching system: State space plot of the Lorenz system (Equation~\eqref{eqn:lorenz}), where the bifurcation parameter $\alpha$ switches from $-1$ (blue curve) to $6.6$ (red curve). }
\label{fig:switching}
\end{figure}

\noindent\textbf{Switching Systems.} For our last example, we illustrate a potential application of the proposed method to switching systems.
Consider the Lorenz system, Equation~\eqref{eqn:lorenz}, where the parameter $\alpha$ changes from $-1$ to $6.6$ at some unknown time. The state space of the whole system is plotted in Figure ~\ref{fig:switching}. Since the location of the parameter change is unknown (as well as the underlying model), we break the entire trajectory into $M$ sub-trajectories and consider each of these sub-trajectories as our different sources, $X^{(i)}$. Therefore, this problem fits within our framework-- the learned parameters are allowed to vary between each sub-trajectory. In this example we take $M=32$. The recovered coefficients for the second component $\dot{y}$ are plotted in Figure~\ref{fig:switching_coef}, where the terms $x$, $y$, and $xz$ are correctly identified in all sub-trajectories except the one that contains the switch.

\begin{figure}[h!]
\centering
\includegraphics[width = 4 in]{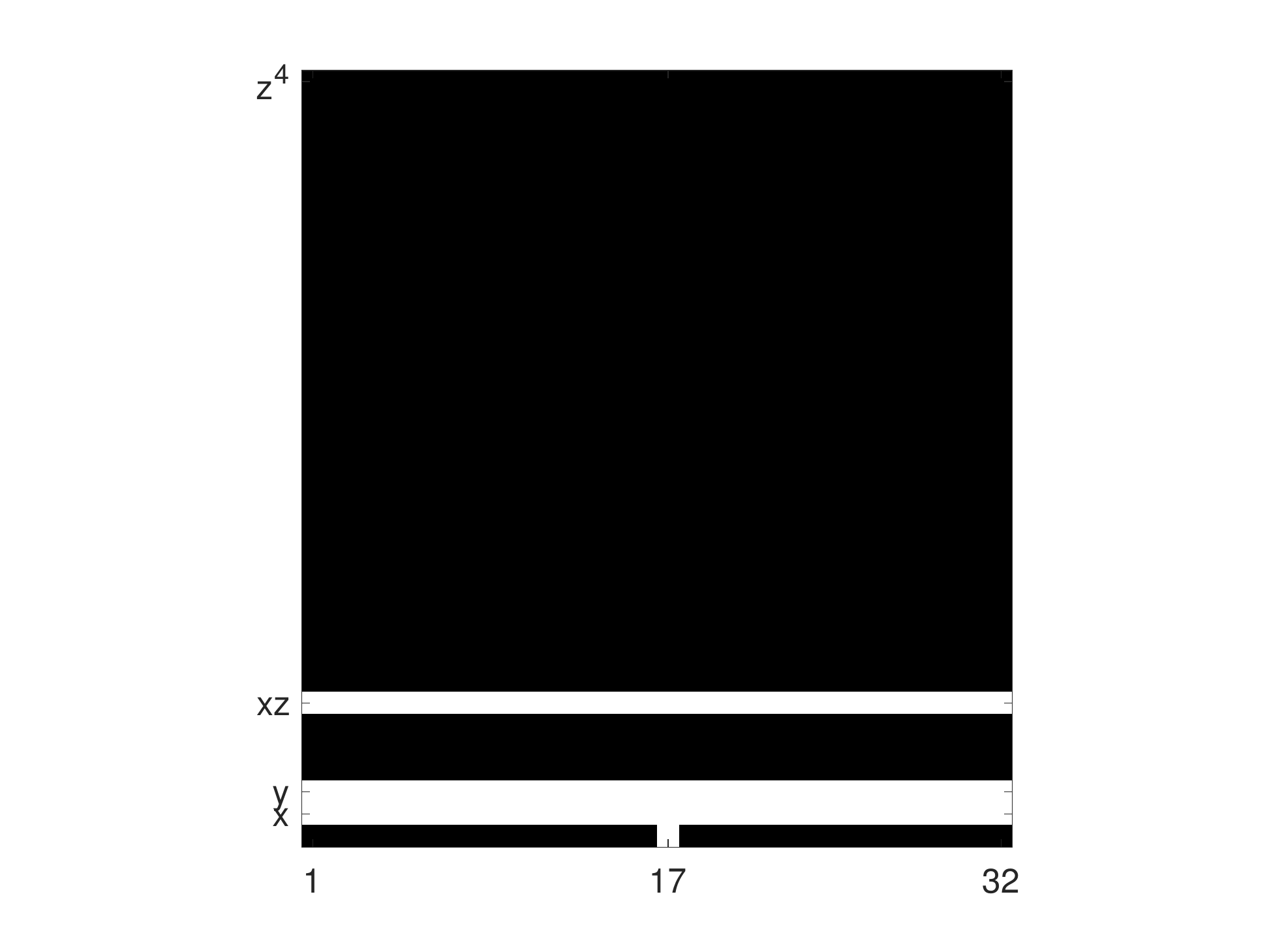}
\caption{Switching System: Indices of the recovered coefficients for the second component of the switching system from Figure \ref{fig:switching}. Our method correctly identifies the terms in the governing equation as well as the location of the switch (the anomalous sub-trajectory at 17).}
\label{fig:switching_coef}
\end{figure}

\section{Conclusion}\label{section:conclusion}
 We presented a method for extracting governing equations from multiple sources of data using a group-sparsity constraint as well as developed a new group-thresholding algorithm to solve our proposed optimization problem. Our main contribution is the use of group sparsity  for learning physical laws from multiple data sources which are controlled by the same mathematical model with different parameters. We also provide convergence guarantees for the associated regression problem. Lastly, convergence of our algorithm to a local minimizer is detailed.

\section*{Acknowledgments}
H.S. acknowledges the support of AFOSR, FA9550-17-1-0125. R.W. and G.T. acknowledge the support of NSF CAREER grant $\#1255631$. The authors would like to thank the CNA and NSF for their support of the  CNA-KiNet Workshop: ``Dynamics and Geometry from High Dimensional Data" at Carnegie Mellon University in March 2017, where this work was finalized and presented.

\section{Appendix}

Proof of Theorem~\ref{thrm:convergence}.\\

Part 1:
 Let $\widetilde{C}^{k+1}$ be defined as:
 $$\widetilde{C}^{k+1}:=H_{\sqrt{\gamma}}\left(C^{k} + D^T\star(V -  D\star C^{k} )\right),$$ then
we can show that $F(C^{k+1}) \leq F(C^{k})$ by adapting some of the arguments from \cite{blumensath2008iterative}. The objective function at $C^{k+1}$ can be bounded by:
\begin{align*}
F(C^{k+1})&=\| D\star C^{k+1} - V \|_2^2 +  \gamma \|C^{k+1}\|_{2,0}\\
&=\| D\star C^{k+1} - V \|_2^2 +   +  \gamma \|\widetilde{C}^{n+1}\|_{2,0}\\
&\leq\| D\star \widetilde{C}^{k+1} - V \|_2^2  +  \gamma \|\widetilde{C}^{k+1}\|_{2,0}=F(\widetilde{C}^{k+1})
\end{align*}
where the second line comes from the fact that $C^{k+1}$ and $\widetilde{C}^{k+1}$ have the same support set (and thus the same value with respect to $\ell^{2,0}$) and the third line comes from the restricted least-squares update (second step of Equation~\eqref{eq:updatematrix}). Let $A^{(i)}:=\textbf{I}-  \left(D^{(i)}\right)^T D^{(i)}$ and assume that the eigenvalues of $A^{(i)}$, denoted as $\lambda^{(i)}$, are bounded away from zero and are less than 1, i.e.  $\lambda^{(i)}\in[\overline{\lambda},1]$ for $\overline{\lambda}>0$. Define the norm with respect to $A$ as $\| - \|_{2,A}$, then
\begin{align*}
F(\widetilde{C}^{k+1})&\leq F(\widetilde{C}^{k+1})+ \|\widetilde{C}^{k+1}-C^{k} \|^2_{2,A^{(i)}}\\
&=\| D\star\widetilde{C}^{k+1} -V \|_2^2 +\gamma\, \|\widetilde{C}^{k+1}\|_{2,0} + \|\widetilde{C}^{k+1}-C^{k} \|^2_2 -\|D\star(\widetilde{C}^{k+1}-C^{k}) \|^2_{2} \\
&=F^*(\widetilde{C}^{k+1},C^{k})\\
&=\argmin{C} \ F^*(C,C^{k})\\
&\leq F^*(C^{k},C^{k})\\
&= F(C^{k}).
\end{align*}
Note that this argument also shows that $F(\widetilde{C}^{k+1}) \leq F(\widetilde{C}^{k})$. This implies that the energy $F$ converges. 

\medskip

Part 2: We show convergence to a local minimizer when the dictionary is coercive. To do so, consider the finite sum:
$$\sum\limits_{k=0}^N \| {C}^{k+1}-C^{k} \|_2^2 $$
which increases monotonically with respect to $N$. The sum is bounded by:
\begin{align*}
\sum\limits_{k=0}^N \| C^{k+1}-C^{k} \|_2^2 \leq \sum\limits_{k=0}^N \| C^{k+1}-\widetilde{C}^{k+1} \|_2^2+ \| \widetilde{C}^{k+1}-C^{k} \|_2^2 
\end{align*}
which we will individually bound as follows. The first term is bounded by:
\begin{align*}
\sum\limits_{n=0}^N \| \widetilde{C}^{k+1}-C^{k} \|_2^2 &= \sum\limits_{k=0}^N \sum\limits_{i=1}^m \ \left\|\left(\widetilde{c}^{(i)}\right)^{k+1}-\left({c}^{(i)}\right)^{k} \right\|^2_{2}\\
&\leq \bar{\lambda}^{-1}\, \sum\limits_{k=0}^N \sum\limits_{i=1}^m \ \left\|\left(\widetilde{c}^{(i)}\right)^{k+1}-\left({c}^{(i)}\right)^{k} \right\|^2_{2,A^{(i)}}\\
&\leq \bar{\lambda}^{-1}\, \sum\limits_{k=0}^N \left( F(C^{k})-F(C^{k+1})\right)\\
&= \bar{\lambda}^{-1}\, \left( F(C^{0})-F(C^{N+1})\right)\\
&\leq \bar{\lambda}^{-1}\, F(C^{0}).
\end{align*}
To bound the second term, consider the norm restricted onto the support set $S^{k+1}$:
\begin{align*}
\sum\limits_{k=0}^N \| C^{k+1}-\widetilde{C}^{k+1} \|_2^2 = \sum\limits_{k=0}^N \sum\limits_{i=1}^m \ \left\|\left({c}^{(i)}\right)^{k+1}-\left(\widetilde{c}^{(i)}\right)^{k+1} \right\|^2_{2}= \sum\limits_{k=0}^N \sum\limits_{i=1}^m \ \left\|\left({c}^{(i)}\right)^{k+1}-\left(\widetilde{c}^{(i)}\right)^{k+1} \right\|^2_{2 \, | \,S^{k+1}}.\\
\end{align*}
If the matrix $D$ is coercive over  $S^{k+1}$, with coercivity constant $\delta>0$, then
\begin{align*}
 \sum\limits_{k=0}^N \sum\limits_{i=1}^m \ \left\|\left({c}^{(i)}\right)^{k+1}-\left(\widetilde{c}^{(i)}\right)^{k+1} \right\|^2_{2 \, | \,S^{k+1}}     
 & \leq \delta^{-1}\, \sum\limits_{k=0}^N \sum\limits_{i=1}^m \ \left\| D^{(i)}|_{S^{k+1}} \left( \left({c}^{(i)}\right)^{k+1}-\left(\widetilde{c}^{(i)}\right)^{k+1} \right) \right\|^2_{2 \, | \,S^{k+1}}\\
  & \leq \delta^{-1}\, \sum\limits_{k=0}^N \left( F(\widetilde{C}^{k+1} ) -  F({C}^{k+1} )\right)\\
    & \leq \delta^{-1}\, \sum\limits_{k=0}^N \left( F(C^{k} ) -  F({C}^{k+1} )\right)\\
    & \leq \delta^{-1}\,F(C^{0} )
    \end{align*}
Combining these two bounds yields:
\begin{align*}
\sum\limits_{k=0}^N \| C^{k+1}-C^{k} \|_2^2 \leq \sum\limits_{k=0}^N \| C^{k+1}-\widetilde{C}^{k+1} \|_2^2+ \| \widetilde{C}^{k+1}-C^{k} \|_2^2 \leq (\bar{\lambda}^{-1}+\delta^{-1})\, F(C^{0}).
\end{align*}
 Therefore, for any $\epsilon>0$, there exist an $N>0$ such that for all $k>N$, we have $\| C^{k+1}-C^{k} \|_2 \leq \epsilon$. Using this condition and an analogy of Lemma 3.4 from \cite{blumensath2008iterative} (changing element-wise to row-wise) yields a subsequence $C^k$ converging to a local minimizer.

%

\end{document}